\def\e{\epsilon}
\numberwithin{equation}{section}              % numerazionedelleequazioni
\newtheorem{theorem}{Theorem}[section]
\newtheorem{lemma}{Lemma}[section]
\newtheorem{proposition}{Proposition}[section]
\newtheorem*{proposition*}{Proposition}
\newtheorem*{corollary*}{Corollary}
\newtheorem{definition}{Definition}[section]
\newtheorem*{definitions*}{Definitions}
\newtheorem*{conjecture*}{\bf Conjecture}
\newtheorem*{example*}{\bf Example}
\theoremstyle{remark}
\newtheorem{remark}{\bf Remark}[section]
\begin{document}
\date{}                                     %TITOLO\date\today
%Data
\title{A dispersive regularization for the modified Camassa-Holm equation}

\author[1,2,3]{Yu Gao\thanks{yugao@hit.edu.cn}}
\author[2]{Lei Li\thanks{leili@math.duke.edu}}
\author[2,3]{Jian-Guo Liu\thanks{jliu@phy.duke.edu}}
\affil[1]{Department of Mathematics, Harbin Institute of Technology, Harbin, 150001, P.R. China.}
\affil[2]{Department of Mathematics, Duke University, Durham, NC 27708, USA.}
\affil[3]{ Department of Physics, Duke University, Durham, NC 27708, USA.}

\maketitle
%******TEXTE***************************************************

\begin{abstract}
In this paper, we present a dispersive regularization for the modified Camassa-Holm equation (mCH) in one dimension, which is achieved through a double mollification for the system of ODEs describing trajectories of $N$-peakon solutions. From this regularized system of ODEs, we obtain approximated $N$-peakon solutions with no collision between peakons. Then, a global $N$-peakon solution for the mCH equation is obtained, whose trajectories are global Lipschitz functions and do not cross each other. When $N=2$, the limiting solution is a sticky peakon weak solution. By a limiting process, we also derive a system of ODEs to describe $N$-peakon solutions.  At last, using the $N$-peakon solutions and through a mean field limit process, we obtain global weak solutions for general initial data $m_0$ in Radon measure space.
\end{abstract}

\section{Introduction}
This work is devoted to investigate the $N$-peakon solutions to the following modified Camassa-Holm (mCH) equation with cubic nonlinearity:
\begin{align}\label{mCH}
m_t+[(u^2-u^2_x)m]_x=0,\quad m=u-u_{xx},\quad x\in\mathbb{R},~~t>0,
\end{align}
subject to the initial condition
\begin{align}\label{initial m}
m(x,0)=m_0(x),\quad x\in\mathbb{R}.
\end{align}
From the fundamental solution $G(x)=\frac{1}{2}e^{-|x|}$ to the Helmholtz operator $1-\partial_{xx}$, function $u$ can be written as a convolution of $m$ with the kernel $G$:
$$u(x,t)=\int_{\mathbb{R}}G(x-y)m(y)dy.$$
In the mCH equation, the shape of function $G$ is referred to as a peakon at $x=0$ and the mCH equation has weak solutions (see Definition \ref{weaksolution}) with $N$ peakons, which are of the form \cite{GaoLiu,GuiLiuOlverQu}:
\begin{align}\label{eq:Npeakon}
u^N(x,t)=\sum_{i=1}^Np_iG(x-x_i(t)),~~m^N(x,t)=\sum_{i=1}^Np_i\delta(x-x_i(t)),
\end{align}
where $p_i$ ($1\leq i\leq N$) are constant amplitudes of peakons. We call this kind of weak solutions as $N$-peakon solutions.
When $x_1(t)<x_2(t)<\cdots<x_N(t)$, trajectories $x_i(t)$ of $N$-peakon solutions in \eqref{eq:Npeakon} satisfies  \cite{GaoLiu,GuiLiuOlverQu}:
\begin{equation}\label{eq:Npeakonmonoticity0}
\frac{d}{dt}x_i=\frac{1}{6}p_i^2+\frac{1}{2}\sum_{j< i}p_ip_je^{x_j-x_i}+\frac{1}{2}\sum_{j> i}p_ip_je^{x_i-x_j}+\sum_{1\leq m<i<n\leq N}p_mp_ne^{x_m-x_n}.
\end{equation} 
In general, solutions $\{x_i(t)\}_{i=1}^N$ to \eqref{eq:Npeakonmonoticity0} will collide with each other in finite time (see Remark \ref{rmk:nonuniqueness}).
By the standard ODE theories, we know that \eqref{eq:Npeakonmonoticity0} has global solutions $\{x_i(t)\}_{i=1}^N$ subject to any initial data $\{x_i(0)\}_{i=1}^N$.  However, $u^N(x,t)$ constructed by \eqref{eq:Npeakon} with  global solutions $\{x_i(t)\}_{i=1}^N$ to \eqref{eq:Npeakonmonoticity0} is not a weak solution to the mCH equation after the first collision time (see Remark \ref{rmk:rightequation}). There are some nature questions:
\begin{description}
	\item [(i)] What will be a weak solution to the mCH equation after collisions? Is it unique? If not unique, what is the selection principle?
	\item [(ii)] If there is a weak solution to the mCH equation after collisions, is it still in the form of $N$-peakon solutions (peakons can be coincide)?
	\item [(iii)] If the weak solution is still a $N$-peakon solution after collision, how do peakons evolve? In other words, do they stick together, cross each other, or scatter?
\end{description}
Paper \cite{GaoLiu} showed global existence and nonuniqueness of weak solutions when initial data $m_0\in\mathcal{M}(\mathbb{R})$ (Radon measure space), which partially answered question (i). After collision, all the situations mentioned in the above question (iii) can happen (see Remark \ref{rmk:nonuniqueness}). 

In this paper, we will study these questions through a dispersive regularization for the following reasons.  
\begin{description}
	\item[(i)] This dispersive regularization could be a candidate for the selection principle.
	\item[(ii)] As described below, if initial datum is of $N$-peakon form, then the regularized solution $u^{N,\e}$ is also of $N$-peakon form, and so is the limiting $N$-peakon solution.
\end{description}

The main purpose of this paper is to study the behavior of $\e\to0$ limit for the dispersive regularization. First, we introduce the dispersive regularization for the mCH equation.

To illustrate the dispersive regularization method clearly,  we start with one peakon solution $pG(x-x(t))$ (solitary wave solution). We know that $pG(x-x(t))$ is a weak solution if and only if the traveling speed is $\frac{d}{dt}x(t)=\frac{1}{6}p^2$ \cite[Proposition 4.3]{GaoLiu}. Because  characteristics  equation for \eqref{mCH} is given by
\begin{align}\label{characterics}
\frac{d}{dt}x(t)=u^2(x(t),t)+u_x^2(x(t),t),
\end{align}
for solution $pG(x-x(t))$ we obtain
\begin{align}\label{travelling speed}
\frac{d}{dt}x(t)=p^2G^2(0)-p^2(G_x^2)(0)=\frac{1}{6}p^2.
\end{align}
\eqref{travelling speed} implies that to obtain solitary wave solutions, the correct definition of $G_x^2$ at $0$  is given by
\begin{align}\label{correctdefinition}
(G^2_x)(0)=G^2(0)-\frac{1}{6}=\frac{1}{12}.
\end{align}
However,  $G_x^2$ is a BV function which has a removable discontinuity at  $0$ and 
\begin{align}\label{removable}
(G_x^2)(0-)=(G_x^2)(0+)=\frac{1}{4},
\end{align}
which is different with \eqref{correctdefinition}. 
To understand the discrepancy between \eqref{correctdefinition} and \eqref{removable}, our strategy is to use the dispersive regularization and the limit of the regularization.
Mollify $G(x)$ as
$$G^\epsilon(x):=(\rho_\epsilon\ast G)(x),$$
where $\rho_\epsilon$ is a mollifier that is even (see Definition \ref{defmollifier}). Then, we can obtain \eqref{correctdefinition} in the limiting process (Lemma \ref{convergencetoNpeakon}):
\begin{align}\label{limitingprocess}
\lim_{\epsilon\rightarrow0}(\rho_\epsilon\ast(G_x^\epsilon)^2)(0)=\frac{1}{12}.
\end{align}
The above limiting process is independent of the mollifier $\rho_\e$.

Naturally, we generalize this dispersive regularization method to $N$-peakon solutions  $u^N(x,t)=\sum_{i=1}^N p_iG(x-x_i(t))$. From the characteristic equation \eqref{characterics}, we formally obtain the system of ODEs for $x_i(t)$ 
\begin{align}\label{formequation}
\frac{d}{dt}x_i(t)=\big[u^N(x_i(t),t)\big]^2-\big[u_x^N(x_i(t),t)\big]^2,\quad i=1,\ldots,N.
\end{align}
$\big[u_x^N(x,t)\big]^2=\big(\sum_{j=1}^Np_jG_x(x-x_j(t))\big)^2$ is a BV function and it has a discontinuity at $x_i(t)$.  By using similar regularization method in  \eqref{limitingprocess}, we regularize the vector field in \eqref{formequation}. For $\{x_k\}_{k=1}^N$, denote
\begin{align}\label{uusub}
u^{N,\epsilon}(x;\{x_k\}):=\sum_{i=1}^Np_iG^\epsilon(x-x_i)~\textrm{ and }~U^N_\epsilon(x;\{x_k\}):=\big[u^{N,\epsilon}\big]^2-\big[u_x^{N,\epsilon}\big]^2.
\end{align}
The dispersive regularization for $N$ peakons is given by
\begin{align}\label{approximateODE21}
\frac{d}{dt}x^\epsilon_i(t)=U^{N,\epsilon}(x^\epsilon_i(t);\{x^\epsilon_k(t)\}):=(\rho_\epsilon\ast U^N_\epsilon)(x_i^\epsilon(t);\{x_k^\epsilon(t)\}),\qquad i=1,\ldots,N.
\end{align}
The above regularization method is subtle. We emphasize that if we use $U^N_\epsilon$ given by \eqref{uusub} as a vector field (which is already global Lipschitz) instead of $U^{N,\epsilon}$, then comparing with \eqref{limitingprocess} we have
$$\lim_{\epsilon\rightarrow0}(G^\epsilon_x)^2(0)=0.$$
In this case,  the traveling speed of the soliton (one peakon) is given by 
$$\frac{d}{dt}x(t)=p^2G^2(0)-p^2(G_x^2)(0)=\frac{1}{4}p^2,$$
which is different with the correct speed $\frac{1}{6}p^2$ for one peakon solution.

By solutions to \eqref{approximateODE21}, we construct approximate $N$-peakon solutions to \eqref{mCH} as:
$$u^{N,\epsilon}(x,t):=\sum_{i=1}^Np_iG^\e(x-x_i^\e(t)).$$
Let $\e\to0$ in $u^{N,\epsilon}(x,t)$ and we can obtain a $N$-peakon solution 
\begin{align}\label{eq:sollimit}
u^{N}(x,t)=\sum_{i=1}^Np_iG(x-x_i(t)),
\end{align}
to the mCH equation, where $x_i(t)$ are Lipschitz functions (see Theorem \ref{thm:fixedN}).

If we fix $N$ and let $\epsilon$ go to $0$ in the regularized system of ODEs \eqref{approximateODE21}, we can obtain a limiting ($\epsilon\rightarrow 0$ in the sense described in Proposition \ref{discontinue}) system of ODEs to describe  $N$-peakon solutions.  
\begin{footnotesize}
	\begin{align}\label{eq:NpeakontrajectoriesODE}
	\frac{d}{dt}x_i(t)=\left(\sum_{j=1}^Np_jG(x_i(t)-x_j(t))\right)^2-\left(\sum_{j\in \mathcal{N}_{i1}(t)}p_j G_x(x_i(t)-x_j(t))\right)^2-\frac{1}{12}\left(\sum_{k\in \mathcal{N}_{i2}(t)}p_k\right)^2.
	\end{align}
\end{footnotesize}
The vector field of the above system is not Lipschitz. Solutions for this equation are not unique, which implies peakon solutions to \eqref{mCH} are not unique. The nonuniqueness of peakon solutions was obtained in \cite{GaoLiu}. When $x_1(t)<x_2(t)<\cdots<x_N(t)$, the system of ODEs \eqref{eq:NpeakontrajectoriesODE} is equivalent to \eqref{eq:Npeakonmonoticity0}.

We also prove that trajectories $x_i^\e(t)$ given by \eqref{approximateODE21} never collide with each other (see Theorem \ref{thm:nocollisionepsilon}), which means if $x_1^\e(0)<x_2^\e(0)<\cdots<x_N^\e(0)$, then $x_1^\e(t)<x_2^\e(t)<\cdots<x_N^\e(t)$ for any $t>0$. For the limiting $N$-peakon solutions \eqref{eq:sollimit}, we have $x_1(t)\leq x_2(t)\leq \cdots\leq x_N(t)$. Notice that the sticky $N$-peakon solutions obtained in \cite{GaoLiu} also have this property and in the sticky $N$-peakon solutions, $\{x_i(t)\}_{i=1}^N$ stick together whenever they collide. 
%Under this assumption, \eqref{eq:Npeakonmonoticity0} can be used to give a sticky particle model and the solutions to this model yield global sticky $N$-peakon solutions to the mCH equation. %Moreover, the mean field limit of the sticky particle model gives a general global weak solution when $m_0\in\mathcal{M}(\mathbb{R})$
When $N=2$, we prove that peakon solutions given by the dispersive regularization are exactly the sticky peakon solutions (see Theorem \ref{thm:twosticky}).  
However, the situation when $N\ge3$ can be more complicated.  Some of the peakon solutions given by the dispersive regularization are  sticky peakon solutions  (see Figure \ref{fig:threetrajectories1}) and some are not (see Figure \ref{fig:threetrajectories2}). 

For general initial data $m_0\in\mathcal{M}(\mathbb{R})$, we use a mean field limit method to prove global existence of weak solutions to \eqref{mCH} (see Section \ref{sec:meanfield}).

There are also some other interesting properties about the mCH equation, which we list below.

The mCH equation was introduced as a new integrable system by several different researchers \cite{Fokas,Fuchssteiner,Olever}. In a physical context, it was derived from the two-dimensional Euler equation by using a singular perturbation method in which the variable $u$ represents the velocity of fluid \cite{Qiao2}, and Lax-pair was also given in \cite{Qiao2}. The mCH equation has a bi-Hamiltonian structure \cite{GuiLiuOlverQu,Olever} with Hamiltonian functionals
\begin{align}\label{Hamiltonians}
H_0=\int_{\mathbb{R}}mudx,\quad H_1=\frac{1}{4}\int_{\mathbb{R}}\left(u^4+2u^2u_x^2-\frac{1}{3}u^4_x\right)dx.
\end{align}
\eqref{mCH} can be written in the bi-Hamiltonian form \cite{GuiLiuOlverQu,Olever},
$$m_t=-((u^2-u_x^2)m)_x=J\frac{\delta H_0}{\delta m}=K\frac{\delta H_1}{\delta m},$$
where $$J=-\partial_x\Big(m\partial_x^{-1}(m\partial_x)\Big),\quad K=\partial_x^3-\partial_x$$
are compatible Hamiltonian operators. Here $H_0$ and $H_1$ are conserved quantities for smooth solutions. $H_0$ is also a conserved quantity for $W^{2,1}(\mathbb{R})$ weak solutions \cite{GaoLiu}. $N$-peakon solutions are not in the solution class $W^{2,1}(\mathbb{R})$ and $H_0$, $H_1$ are not conserved for $N$-peakon solutions in the case $N\geq2$. This is different with the CH equation:
$$m_t+(um)_x+mu_x=0,\quad m=u-u_{xx},\quad x\in\mathbb{R},~~t>0,$$
which also has $N$-peakon solutions of the form
$$u^N(x,t)=\sum_{i=1}^Np_i(t)e^{-|x-x_i(t)|}.$$
The amplitudes $p_i(t)$ evolves with time which is different with the $N$-peakon solutions to mCH equation \eqref{mCH} where $p_i$ are constants. $p_i(t)$ and $x_i(t)$ satisfy the following Hamiltonian system of ODEs:
\begin{gather}\label{CH Npeakon}
\left\{\begin{split}
&\frac{d}{dt}x_i(t)=\sum_{j=1}^Np_j(t)e^{-|x_i(t)-x_j(t)|},~~i=1,\ldots,N,\\
&\frac{d}{dt}p_i(t)=\sum_{j=1}^Np_i(t)p_j(t)\mathrm{sgn}\big(x_i(t)-x_j(t)\big)e^{-|x_i(t)-x_j(t)|},~~i=1,\ldots,N,
\end{split}
\right.
\end{gather}
and  the Hamiltonian function is given by
$$\mathcal{H}_0(t)=\frac{1}{2}\sum_{i,j=1}^Np_i(t)p_j(t)e^{-|x_i(t)-x_j(t)|},$$
which is a conserved quantity for $N$-peakon solutions and the corresponding functional $H_0$ given by \eqref{Hamiltonians} is conserved for smooth solutions.
When $p_i(0)>0$, there is no collision between $x_i(t)$ \cite{CamassaLee,AlinaLiu}.
In comparison, system \eqref{eq:Npeakonmonoticity0} is a nonautonomous system as described below. Let $\tilde{x}_i(t):=x_i(t)-\frac{1}{6}p_i^2t$. Denote
$$X(t):=(\tilde{x}_1(t),\tilde{x}_2(t),\cdots,\tilde{x}_N(t))^T,$$ 
and
$$\mathcal{H}(X,t):=\sum_{1\leq i<j\leq N}p_ip_je^{x_i(t)-x_j(t)}=\sum_{1\leq i<j\leq N}p_ip_je^{\frac{1}{6}(p_j^2-p_i^2)t+\tilde{x}_i(t)-\tilde{x}_j(t)}.$$ 
Then, \eqref{eq:Npeakonmonoticity0} can be rewritten as a Hamiltonian system:
\begin{align}\label{eq:Hamiltonian}
\frac{dX}{dt}=A\frac{\delta\mathcal{H}}{\delta X},
\end{align}
where 
\begin{align}\label{Hamiltonian discrete}
A=(a_{ij})_{N\times N},~~a_{ij}=\begin{cases}
-\frac{1}{2},~~i<j;\\
0,~~i=j;\\
\frac{1}{2},~~i>j.
\end{cases},\textrm{ and}~\frac{\delta \mathcal{H}}{\delta X}:=\Big(\frac{\partial\mathcal{H}}{\partial \tilde{x}_1},\ldots, \frac{\partial\mathcal{H}}{\partial \tilde{x}_N}\Big).
\end{align}
Notice that $\mathcal{H}$ depends on $t$ and it is not a conservative quantity. 

For more results about local well-posedness and blow up behavior of the strong solutions to  \eqref{mCH} one can refer to  \cite{Chenrongming,FuGui,GuiLiuOlverQu,Himonas,LiuOlver}.
In \cite{Qingtianzhang}, Zhang used the method of dissipative approximation to prove the existence and uniqueness of global entropy weak solutions $u$ in $W^{2,1}(\mathbb{R})$ for the dispersionless mCH equation \eqref{mCH}. 

The rest of this article is organized as follows. In Section \ref{sec:regularization}, we introduce the dispersive regularization in detail and prove global existence of $N$-peakon solutions. By a limiting process, we obtain a system of ODEs to describe $N$-peakon solutions. In Section \ref{sec:stickylim}, we prove that trajectories of $N$-peakon solutions given by dispersive regularization will never cross each other. When $N=2$, the limiting peakon solutions are exactly the sticky peakon solutions. When $N=3$, we present two figures to show two different situations. At last, we use a mean field limit method to prove global existence of weak solutions to \eqref{mCH} for general initial data $m_0\in\mathcal{M}(\mathbb{R})$.

\section{Dispersive regularization and $N$-peakon solutions}\label{sec:regularization}

In this section, we introduce the dispersive regularization in detail and use the regularized ODE system to give approximate solutions. Then, by some compactness arguments we prove global existence of $N$-peakon solutions.
 
 \subsection{Dispersive regularization and weak consistency}

First,  let $\mathcal{S}(\mathbb{R})$ be the Schwartz class of smooth functions to define mollifiers.    $f\in\mathcal{S}(\mathbb{R})$ if and only if $f\in C^\infty(\mathbb{R})$ and for all positive integers $m$ and $n$ 
$$\sup_{x\in\mathbb{R}}|x^mf^{(n)}(x)|<\infty.$$
\begin{definition}\label{defmollifier}
(i). Define the mollifier $0\leq\rho\in \mathcal{S}(\mathbb{R})$ satisfying
\begin{align*}
\int_{\mathbb{R}}\rho(x)dx=1,\quad \rho(x)=\rho(|x|)~\textrm{ for }~x\in\mathbb{R}.
\end{align*}
(ii). For each $\epsilon>0$, set
$$\rho_\epsilon(x):=\frac{1}{\epsilon}\rho(\frac{x}{\epsilon}).$$
\end{definition}

Fix an integer $N>0$. Give an initial data
\begin{align}\label{eq:m0N}
m_0^N(x)=\sum_{i=1}^Np_i\delta(x-c_i),~~c_1<c_2<\cdots<c_N~\textrm{ and }~\sum_{i=1}^N|p_i|\leq M_0,
\end{align}
for some constants $p_i$, $c_i$ ($1\leq i\leq N$) and  $M_0$.

As stated in Introduction, we set $G^\epsilon(x)=(G\ast\rho_\epsilon)(x)$.
For any $N$ particles $\{x_k\}_{k=1}^N\subset\mathbb{R}$, define ($p_k$ is the same as in \eqref{eq:m0N})
$$u^{N,\epsilon}(x;\{x_k\}_{k=1}^N):=\sum_{k=1}^Np_kG^\epsilon(x-x_k),$$
$$U^N_\epsilon(x;\{x_k\}_{k=1}^N):=\left[(u^{N,\epsilon})^2-(\partial_xu^{N,\epsilon})^2\right](x;\{x_k\}_{k=1}^N),$$
and
$$U^{N,\epsilon}(x;\{x_k\}_{k=1}^N):=(\rho_\epsilon\ast U_\epsilon^N)(x;\{x_k\}_{k=1}^N).$$
The system of ODEs for dispersive regularization is given by
\begin{align}\label{approximateODE}
\frac{d}{dt}x_i^\epsilon(t)=U^{N,\epsilon}(x^\epsilon_i(t);\{x_k^\epsilon(t)\}_{k=1}^N),\quad i=1,\cdots,N,
\end{align}
with initial data $x_i^\epsilon(0)=c_i$ given in \eqref{eq:m0N}. This system is equivalent to \eqref{approximateODE21} mentioned in  Introduction.  Because $U^{N,\epsilon}$ is  Lipschitz and bounded,  existence and uniqueness of a global solution $\{x_i^\epsilon(t)\}_{i=1}^N$ to this system of ODEs follow from  standard ODE theories.  By using the solution $\{x_i^\epsilon(t)\}_{i=1}^N$, we set
\begin{align}
u^{N,\epsilon}(x,t):=u^{N,\epsilon}(x;\{x_k^\epsilon(t)\}_{k=1}^N)\label{uNepisilon}
\end{align}
and
\begin{align}
m^{N,\epsilon}(x,t):=\sum_{i=1}^Np_i\rho_\epsilon(x-x_i^\epsilon(t)),\quad m^N_\epsilon(x,t):=\sum_{i=1}^Np_i\delta(x-x_i^\epsilon(t)).\label{mNepisilon}
\end{align}
Due to $(1-\partial_{xx})G^\epsilon=\rho_\epsilon$, we have
\begin{align}\label{m=Gastu}
m^{N,\epsilon}(x,t)=(\rho_{\epsilon}\ast m^N_\epsilon)(x,t)~\textrm{ and }~(1-\partial_{xx})u^{N,\epsilon}(x,t)=m^{N,\epsilon}(x,t).
\end{align}
Set
\begin{align}
U^N_\epsilon(x,t):=U^N_\epsilon(x;\{x_k^\epsilon(t)\}_{k=1}^N),\quad U^{N,\epsilon}(x,t):=U^{N,\epsilon}(x;\{x_k^\epsilon(t)\}_{k=1}^N).\label{UNepisilon}
\end{align}
Therefore,  $U^{N,\epsilon}(x,t)=(\rho_\epsilon\ast U^N_\epsilon)(x,t)$ and \eqref{approximateODE} (or \eqref{approximateODE21}) can be rewritten as
\begin{align}\label{approximateODE2}
\frac{d}{dt}x_i^\epsilon(t)=U^{N,\epsilon}(x^\epsilon_i(t),t),\quad i=1,\cdots,N.
\end{align}

Next, we show that $u^{N,\epsilon}$ defined by \eqref{uNepisilon} is weak consistent with the mCH equation \eqref{mCH}. Let us give the definition of  weak solutions first.
Rewrite \eqref{mCH} as an equation of $u$,
\begin{align*}
&\quad(1-\partial_{xx})u_t+[(u^2-u^2_x)(u-u_{xx})]_x\\
&=(1-\partial_{xx})u_t+(u^3+uu_x^2)_x-\frac{1}{3}(u^3)_{xxx}+\frac{1}{3}(u^3_x)_{xx}=0.
\end{align*}
For test function $\phi\in C_c^\infty(\mathbb{R}\times[0,T))$ ($T>0$), we denote the functional
\begin{align}\label{weakfunctional}
\mathcal{L}(u,\phi):&=\int_0^T\int_{\mathbb{R}}u(x,t)[\phi_t(x,t)-\phi_{txx}(x,t)]dxdt\nonumber\\
&\quad-\frac{1}{3}\int_0^T\int_{\mathbb{R}}u^3_x(x,t)\phi_{xx}(x,t)dxdt-\frac{1}{3}\int_0^T\int_{\mathbb{R}}u^3(x,t)\phi_{xxx}(x,t)dxdt\nonumber\\
&\quad+\int_0^T\int_{\mathbb{R}}(u^3+uu_x^2)\phi_x(x,t)dxdt.
\end{align}
Then, the definition of weak solutions  in terms of $u$ is given as follows.
\begin{definition}\label{weaksolution}
For $m_0\in\mathcal{M}(\mathbb{R})$, a function
\[
u\in C([0,T);H^1(\mathbb{R}))\cap L^\infty(0,T; W^{1,\infty}(\mathbb{R}))
\] 
is said to be a weak solution of the mCH equation if
$$\mathcal{L}(u,\phi)=-\int_{\mathbb{R}}\phi(x,0)dm_0$$
holds for all $\phi\in C_c^\infty(\mathbb{R}\times[0,T))$.  If $T=+\infty$, we call $u$ as a global weak solution of the mCH equation.
\end{definition}

For simplicity in notations, we denote
$$\langle f(x,t), g(x,t)\rangle:=\int_0^\infty\int_{\mathbb{R}}f(x,t)g(x,t)dxdt.$$
With the definitions \eqref{mNepisilon}-\eqref{approximateODE2}, for any $\phi\in C_c^\infty(\mathbb{R}\times[0,T))$, we have
\begin{align}\label{consistence0}
&\quad\langle m^N_\epsilon,\phi_t\rangle+\langle U^{N,\epsilon} m^N_\epsilon,\phi_x\rangle\nonumber\\
&=\int_0^T\int_{\mathbb{R}}\sum_{i=1}^Np_i\delta(x-x_i^\epsilon(t))\phi_t(x,t)dxdt\nonumber\\
&\qquad\qquad\qquad+\int_0^T\int_{\mathbb{R}}\sum_{i=1}^Np_i\delta(x-x_i^\epsilon(t))U^{N,\epsilon}(x,t)\phi_x(x,t)dxdt\nonumber\\
&=\int_0^T\sum_{i=1}^Np_i[\phi_t(x_i^\epsilon(t),t)+U^{N,\epsilon}(x^\epsilon_i(t),t)\phi_x(x^\epsilon_j(t),t)]dt\nonumber\\
&=\int_0^T\sum_{i=1}^Np_i\frac{d}{dt}\phi(x_i^\epsilon(t),t)dt=-\sum_{i=1}^N\phi(x_i(0),0)p_i=-\int_{\mathbb{R}}\phi(x,0)dm^N_0.
\end{align}
On the other hand, combining  the definition  \eqref{m=Gastu} and \eqref{weakfunctional} gives
\begin{align*}
\mathcal{L}(u^{N,\epsilon},\phi)&=\int_0^T\int_{\mathbb{R}}u^{N,\epsilon}[\phi_t-\phi_{txx}]dxdt-\frac{1}{3}\int_0^T\int_{\mathbb{R}}(\partial_xu^{N,\epsilon})^3\phi_{xx}dxdt\\
&\qquad-\frac{1}{3}\int_0^T\int_{\mathbb{R}}(u^{N,\epsilon})^3\phi_{xxx}dxdt+\int_0^T\int_{\mathbb{R}}((u^{N,\epsilon})^3+u^\epsilon(u_x^{N,\epsilon})^2)\phi_xdxdt\\
&=\langle\phi_t,(1-\partial_{xx})u^{N,\epsilon}\rangle+\langle[(u^{N,\epsilon})^2-(\partial_xu^{N,\epsilon})^2](1-\partial_{xx})u^{N,\epsilon},\phi_x\rangle\\
&=\langle m^{N,\epsilon},\phi_t\rangle+\langle  U^N_\epsilon m^{N,\epsilon}, \phi_x\rangle.
\end{align*}
Set
\begin{align}\label{RHSconsistency}
E_{N,\epsilon}:&=\mathcal{L}(u^{N,\epsilon},\phi)+\int_{\mathbb{R}}\phi(x,0)dm^N_0\nonumber\\
&=\langle m^{N,\epsilon}-m^N_\epsilon,\phi_t\rangle+\langle  U^N_\epsilon m^{N,\epsilon}-U^{N,\epsilon} m^N_\epsilon, \phi_x\rangle.
\end{align}
We have the following consistency result.
\begin{proposition}\label{consistency}
We have the following estimate for $E_{N,\epsilon}$ defined by \eqref{RHSconsistency}:
\begin{align}\label{weakconsistence}
|E_{N,\epsilon}|\leq C\epsilon,
\end{align}
where the constant $C$ is independent of $N,\epsilon.$
\end{proposition}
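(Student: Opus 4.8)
The plan is to estimate the two pairings in \eqref{RHSconsistency} separately, writing $E_{N,\epsilon}=\mathrm{I}+\mathrm{II}$ with $\mathrm{I}:=\langle m^{N,\epsilon}-m^N_\epsilon,\phi_t\rangle$ and $\mathrm{II}:=\langle U^N_\epsilon m^{N,\epsilon}-U^{N,\epsilon}m^N_\epsilon,\phi_x\rangle$. Throughout I would keep track of two uniform (in $N$ and $\epsilon$) bounds. First, $\sum_i|p_i|\le M_0$ from \eqref{eq:m0N}. Second, since mollification does not increase the sup-norm, $\|G^\epsilon\|_\infty\le\|G\|_\infty=\frac12$ and $\|\partial_x G^\epsilon\|_\infty\le\|G_x\|_\infty=\frac12$, whence $\|u^{N,\epsilon}(\cdot,t)\|_\infty,\|\partial_x u^{N,\epsilon}(\cdot,t)\|_\infty\le\frac12 M_0$ and therefore $\|U^N_\epsilon(\cdot,t)\|_\infty\le\frac12 M_0^2$. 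These already guarantee that the final constant will not depend on $N$.

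For the first term I would use $m^{N,\epsilon}=\rho_\epsilon\ast m^N_\epsilon$ (from \eqref{m=Gastu}) to write, for each $i$, $\int_{\mathbb R}[\rho_\epsilon(x-x_i^\epsilon(t))-\delta(x-x_i^\epsilon(t))]\phi_t(x,t)\,dx=\int_{\mathbb R}\rho_\epsilon(y)[\phi_t(x_i^\epsilon+y,t)-\phi_t(x_i^\epsilon,t)]\,dy$, and then Taylor expand $\phi_t$ in $y$. Because $\rho$ is even, $\int y\,\rho_\epsilon(y)\,dy=0$, so the first-order term cancels and the remainder is controlled by $\frac12\|\phi_{txx}\|_\infty\int y^2\rho_\epsilon(y)\,dy=C\epsilon^2$. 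Summing against $p_i$ and integrating over $[0,T]$ gives $|\mathrm I|\le C M_0 T\|\phi_{txx}\|_\infty\,\epsilon^2$, which is even better than required.

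The second term carries the real content. The key algebraic observation is that, since $\rho_\epsilon$ is even, the convolution is symmetric: $(\rho_\epsilon\ast U^N_\epsilon)(x_i^\epsilon)=\int_{\mathbb R}\rho_\epsilon(x_i^\epsilon-x)U^N_\epsilon(x)\,dx=\int_{\mathbb R}\rho_\epsilon(x-x_i^\epsilon)U^N_\epsilon(x)\,dx$. Using this together with $m^{N,\epsilon}=\rho_\epsilon\ast m^N_\epsilon$ and evaluating both pairings against $\phi_x$, the two products combine, for each $i$, into the single integral $\int_{\mathbb R}\rho_\epsilon(x-x_i^\epsilon)\,U^N_\epsilon(x)\,[\phi_x(x,t)-\phi_x(x_i^\epsilon,t)]\,dx$. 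The point is that the mollifier has been transferred off $U^N_\epsilon$ and onto the test function, leaving the difference $\phi_x(x)-\phi_x(x_i^\epsilon)$. Bounding $|\phi_x(x)-\phi_x(x_i^\epsilon)|\le\|\phi_{xx}\|_\infty|x-x_i^\epsilon|$ and using $\|U^N_\epsilon\|_\infty\le\frac12 M_0^2$ reduces the estimate to the first absolute moment $\int_{\mathbb R}\rho_\epsilon(y)|y|\,dy=\epsilon\int_{\mathbb R}\rho(z)|z|\,dz=C\epsilon$ (finite since $\rho\in\mathcal S(\mathbb R)$). Summing against $|p_i|\le M_0$ and integrating in $t$ yields $|\mathrm{II}|\le C M_0^3 T\|\phi_{xx}\|_\infty\,\epsilon$.

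Combining the two bounds gives $|E_{N,\epsilon}|\le C\epsilon$, with $C$ depending only on $T$, $M_0$, and finitely many sup-norms of derivatives of the fixed test function $\phi$, but independent of $N$ and $\epsilon$. The step I expect to be the crux is the symmetrization in $\mathrm{II}$: recognizing that the particular double-mollification in the definition of $U^{N,\epsilon}$ is exactly what lets $U^N_\epsilon m^{N,\epsilon}$ and $U^{N,\epsilon}m^N_\epsilon$ be matched term-by-term into a difference $\phi_x(x)-\phi_x(x_i^\epsilon)$, so that one gains a full power of $\epsilon$ instead of being stuck at $O(1)$. Everything else is a bounded Taylor expansion together with the moment bounds on $\rho_\epsilon$ and the $N$-free $L^\infty$ bound on $U^N_\epsilon$.
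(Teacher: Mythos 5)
Your proof is correct and takes essentially the same route as the paper's: the same splitting of $E_{N,\epsilon}$ into the two pairings, the same key symmetrization in the second term (using evenness of $\rho_\epsilon$ to rewrite $U^{N,\epsilon}(x_i^\epsilon)=\int_{\mathbb{R}}U^N_\epsilon(x)\rho_\epsilon(x-x_i^\epsilon)\,dx$ and collapse $U^N_\epsilon m^{N,\epsilon}-U^{N,\epsilon}m^N_\epsilon$ into the difference $\phi_x(x,t)-\phi_x(x_i^\epsilon,t)$), and the same bounds $\|U^N_\epsilon\|_{L^\infty}\le\tfrac12 M_0^2$, $\sum_i|p_i|\le M_0$, and $\int_{\mathbb{R}}|y|\rho_\epsilon(y)\,dy\le C_\rho\epsilon$. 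The only (harmless) deviation is in the first term, where the paper just uses the Lipschitz bound $|\phi_t(x,t)-\phi_t(x_i^\epsilon,t)|\le\|\phi_{tx}\|_{L^\infty}|x-x_i^\epsilon|$ to get $O(\epsilon)$, while you exploit the vanishing first moment of the even mollifier to get a sharper, but unneeded, $O(\epsilon^2)$.
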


\begin{proof}
%Because $\rho\in \mathcal{S}(\mathbb{R})$, we have $|x|^3\rho(x)\leq C$. Hence,
%$$|x|\rho(x)\leq \frac{C}{x^2}~\textrm{ for }~x\neq0.$$
By changing of variable and the definition of Schwartz function, we can obtain
\begin{align}\label{eq:schwarz1}
\int_{\mathbb{R}}|x|\rho_\e(x)dx=\int_{\mathbb{R}}|x|\frac{1}{\e}\rho(\frac{x}{\e})dx=\e\int_{\mathbb{R}}|x|\rho(x)dx\leq C_\rho\e,
\end{align}
for some constant $C_\rho.$ 

 Due to $\sum_{i=1}^N|p_i|\leq M_0$ and \eqref{eq:schwarz1}, the first term on the right hand side of \eqref{RHSconsistency} can be estimated as
\begin{align*}
\left|\langle m^{N,\epsilon}-m^N_\epsilon,\phi_t\rangle\right|&=\left|\int_0^T\int_{\mathbb{R}}\sum_{i=1}^Np_i\rho_\epsilon(x-x^\epsilon_i(t))[\phi_t(x,t)-\phi_t(x_i^\epsilon(t),t)]dxdt\right|\\
&\leq \sum_{i=1}^N|p_i|\int_0^T\int_{\mathbb{R}}\rho_\epsilon(x-x^\epsilon_i(t))||\phi_{tx}||_{L^\infty}|x-x_i^\e(t)|dxdt\\
&\leq C_\rho M_0||\phi_{tx}||_{L^\infty}T\epsilon.
\end{align*}
For the second term, by definitions \eqref{mNepisilon} and \eqref{UNepisilon} we can obtain
\begin{align*}
&\quad\langle  U^N_\epsilon m^{N,\epsilon}-U^{N,\epsilon} m^N_\epsilon, \phi_x\rangle\\
&=\sum_{i=1}^Np_i\int_0^T\int_{\mathbb{R}}U^N_\epsilon(x)\rho_\epsilon(x-x_i^\epsilon(t))\phi_x(x,t)dxdt-\sum_{i=1}^Np_i\int_0^TU^{N,\epsilon}(x_i^\epsilon(t))\phi_x(x_i^\epsilon(t),t)dt\\
&=\sum_{i=1}^Np_i\int_0^T\int_{\mathbb{R}}U^N_\epsilon(x)\rho_\epsilon(x-x_i^\epsilon(t))\phi_x(x,t)dxdt\\
&\qquad\qquad\qquad\qquad-\sum_{i=1}^Np_i\int_0^T\int_{\mathbb{R}}U^N_\epsilon(x)\rho_\epsilon(x_i^\epsilon(t)-x)\phi_x(x_i^\epsilon(t),t)dxdt\\
&=\sum_{i=1}^Np_i\int_0^T\int_{\mathbb{R}}U^N_\epsilon(x)\rho_\epsilon(x-x_i^\epsilon(t))[\phi_x(x,t)-\phi_x(x_i^\epsilon(t),t)]dxdt.
\end{align*}
Due to $||U^N_\epsilon||_{L^\infty}\leq \frac{1}{2}M_0^2$, we have
$$\left|\langle  U^N_\epsilon m^{N,\epsilon}-U^{N,\epsilon} m^N_\epsilon, \phi_x\rangle\right|\leq \frac{1}{2}C_\rho M_0^3||\phi_{xx}||_{L^\infty}T\epsilon.$$

This ends the proof.

\end{proof}

\subsection{Convergence theorem}
In this subsection, we prove global existence of $N$-peakon solutions for the mCH equation. 
\begin{theorem}\label{thm:fixedN}
Let $m_0^N(x)$ be given by \eqref{eq:m0N} and $\{x_i^\epsilon(t)\}_{i=1}^N$ is defined by \eqref{approximateODE2} subject to initial data $x^\epsilon_i(0)=c_i$. $u^{N,\epsilon}(x,t)$ is defined by  \eqref{uNepisilon}. Then, the following holds.

(i). There exist $\{x_i(t)\}_{i=1}^N\subset C([0,+\infty))$, such that $x^\epsilon_i\rightarrow x_i$ in $C([0,T])$ as $\epsilon\rightarrow0$ (in subsequence sense) for any $T>0$. Moreover, $x_i(t)$ is global Lipschitz and for a.e. $t>0$, we have
\begin{align}\label{eq:lipschitzxj}
\left|\frac{d}{dt}x_i(t)\right|\leq \frac{1}{2}M_0^2~\textrm{ for }~i=1,\ldots,N.
\end{align}

(ii). Set $u^N(x,t):=\sum_{i=1}^Np_iG(x-x_i(t))$, and we have (in subsequence sense)
\begin{align}\label{eq:l1conver} 
u^{N,\epsilon}\rightarrow u^N, \quad \partial_xu^{N,\epsilon}\rightarrow u_x^N\textrm{ in } L_{loc}^1(\mathbb{R}\times[0,+\infty))\textrm{ as }\e\to0.
\end{align}

(iii). $u^N(x,t)$ is a $N$-peakon solution to \eqref{mCH}.
\end{theorem}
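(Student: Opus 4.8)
The plan is to handle the three parts in sequence, using (i) and (ii) to produce the compactness and strong convergence that let us pass the weak-consistency estimate of Proposition \ref{consistency} to the limit in (iii). For part (i), the key point is that the regularized velocity field is uniformly bounded: since $\|U^N_\epsilon\|_{L^\infty}\leq\frac12 M_0^2$ and $U^{N,\epsilon}=\rho_\epsilon\ast U^N_\epsilon$ is a convolution against a probability density, one also has $\|U^{N,\epsilon}\|_{L^\infty}\leq\frac12 M_0^2$ uniformly in $\epsilon$. Hence each $x_i^\epsilon$ obeys $|\frac{d}{dt}x_i^\epsilon|\leq\frac12 M_0^2$, so the family $\{x_i^\epsilon\}_\epsilon$ is uniformly bounded on $[0,T]$ (all start at the fixed $c_i$) and uniformly Lipschitz, hence equicontinuous. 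By Arzel\`a--Ascoli together with a diagonal argument over $T=1,2,\dots$, I extract a subsequence along which $x_i^\epsilon\to x_i$ in $C([0,T])$ for every $T>0$; the uniform Lipschitz bound passes to the limit, giving \eqref{eq:lipschitzxj} via Rademacher's theorem.

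For part (ii), I would estimate $u^{N,\epsilon}(\cdot,t)-u^N(\cdot,t)$ termwise, writing $p_iG^\epsilon(\cdot-x_i^\epsilon(t))-p_iG(\cdot-x_i(t))$ as $p_i[G^\epsilon-G](\cdot-x_i^\epsilon(t))+p_i[G(\cdot-x_i^\epsilon(t))-G(\cdot-x_i(t))]$. The first piece is controlled by $\|G^\epsilon-G\|_{L^\infty}\leq C\epsilon$ (as $G$ is Lipschitz), and the second by continuity of translation in $L^1$ together with $x_i^\epsilon\to x_i$. For the derivative $\partial_xu^{N,\epsilon}=\sum_i p_iG^\epsilon_x(\cdot-x_i^\epsilon)$ the same splitting applies, but now $G_x$ has a jump at the origin, so $G^\epsilon_x\to G_x$ only in $L^1$ (not uniformly); continuity of translation in $L^1$ again handles the second piece. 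Integrating the resulting $L^1_x$ bounds over a compact time interval, using the uniform $L^\infty$ bounds and dominated convergence, yields \eqref{eq:l1conver}.

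For part (iii), Proposition \ref{consistency} gives $\mathcal{L}(u^{N,\epsilon},\phi)=E_{N,\epsilon}-\int_{\mathbb{R}}\phi(x,0)\,dm^N_0$ with $|E_{N,\epsilon}|\leq C\epsilon\to0$, so it remains to show $\mathcal{L}(u^{N,\epsilon},\phi)\to\mathcal{L}(u^N,\phi)$. The linear term in \eqref{weakfunctional} converges directly from the $L^1_{loc}$ convergence of $u^{N,\epsilon}$ tested against the bounded, compactly supported $\phi_t-\phi_{txx}$. For the cubic terms I pass to a further subsequence so that $u^{N,\epsilon}\to u^N$ and $\partial_xu^{N,\epsilon}\to u_x^N$ almost everywhere; since $\|u^{N,\epsilon}\|_{L^\infty},\|\partial_xu^{N,\epsilon}\|_{L^\infty}\leq\frac12 M_0$ uniformly, the integrands $(u_x^{N,\epsilon})^3$, $(u^{N,\epsilon})^3$, $u^{N,\epsilon}(u_x^{N,\epsilon})^2$ are uniformly bounded and converge a.e.\ on $\mathrm{supp}\,\phi$, so dominated convergence applies. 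This yields $\mathcal{L}(u^N,\phi)=-\int_{\mathbb{R}}\phi(x,0)\,dm^N_0$; together with the easy verification that $u^N\in C([0,T);H^1(\mathbb{R}))\cap L^\infty(0,T;W^{1,\infty}(\mathbb{R}))$ (from continuity of the $x_i$, continuity of translation in $H^1$, and the uniform $W^{1,\infty}$ bound coming from $\sum_i|p_i|\leq M_0$), this shows $u^N$ is an $N$-peakon weak solution.

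The main obstacle is the passage to the limit in the cubic nonlinearities, which is precisely why part (ii) must provide \emph{strong} convergence rather than mere weak-$*$ convergence in $L^\infty$: the latter does not commute with the cubes. The delicate ingredient inside (ii) is that $G_x$ is discontinuous at the origin, so only $L^1$ (not uniform) convergence is available for $\partial_x u^{N,\epsilon}$; combining this strong $L^1_{loc}$ convergence with the uniform $L^\infty$ bounds to upgrade to convergence of the cubic integrands is the crux of the argument. It is worth noting that the double mollification, which is essential for selecting the correct peakon speed in the ODE \eqref{approximateODE2}, does not create extra difficulty here, since in the weak formulation the pointwise value of $(G_x)^2$ on the measure-zero jump set is invisible to the test-function integrals.
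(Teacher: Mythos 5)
Your proposal is correct and follows essentially the same route as the paper: uniform boundedness of $U^{N,\epsilon}$ giving equicontinuity and Arzelà--Ascoli with a diagonal argument for (i), strong $L^1_{loc}$ convergence for (ii), and the consistency estimate of Proposition \ref{consistency} combined with term-by-term passage to the limit in $\mathcal{L}(u^{N,\epsilon},\phi)$ for (iii). The only differences are cosmetic: the paper obtains \eqref{eq:l1conver} from pointwise a.e.\ convergence plus dominated convergence rather than your mollification/translation splitting, and it handles the cubic terms via the factorization $a^3-b^3=(a-b)(a^2+ab+b^2)$ with uniform $L^\infty$ bounds instead of extracting a further a.e.-convergent subsequence.
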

\begin{proof}
(i). Due to $G^\epsilon=G\ast\rho_\e$, we have
$$||G^\epsilon||_{L^\infty}\leq \frac{1}{2}~\textrm{ and }~||G_x^\epsilon||_{L^\infty}\leq \frac{1}{2}.$$
Hence,
\begin{align}\label{eq:uebounded}
||u^{N,\e}||_{L^\infty}\leq \frac{1}{2}M_0~\textrm{ and }~||u_x^{N,\e}||_{L^\infty}\leq \frac{1}{2}M_0,
\end{align}
where $M_0$ is given in \eqref{eq:m0N}.
By Definition \eqref{UNepisilon} and \eqref{eq:uebounded}, we have
\begin{align}\label{eq:Uniformlybounded}
|U^{N,\epsilon}(x,t)|&\leq ||U^N_\epsilon||_{L^\infty}\int_{\mathbb{R}}\rho_\epsilon(x)dx\leq||u^{N,\epsilon}||_{L^\infty}^2+||\partial_xu^{N,\epsilon}||_{L^\infty}^2\nonumber\\
&\leq\frac{1}{4}M_0^2+\frac{1}{4}M_0^2=\frac{1}{2}M_0^2.
\end{align}
Combining \eqref{approximateODE2} and  \eqref{eq:Uniformlybounded}, we have
\begin{align}\label{eq:equicontinuous}
|x^\epsilon_i(t)-x^\epsilon_i(s)|&=\left|\int_s^t\frac{d}{d\tau}x_i^\epsilon(\tau)d\tau\right|=\left|\int_s^tU^{N,\epsilon}(x_i^\epsilon(\tau),\tau)d\tau\right|\nonumber\\
&\leq \int_s^t|U^{N,\epsilon}(x^\epsilon_i(\tau),\tau)|d\tau\leq\frac{1}{2}M_0^2|t-s|.
\end{align}
For each $1\leq i\leq N$, by \eqref{eq:Uniformlybounded} and \eqref{eq:equicontinuous}, we know $\{x_i^\epsilon(t)\}_{\epsilon>0}$ is uniformly (in $\epsilon$) bounded and equi-continuous in $[0,T]$. For any fixed time $T>0$, Arzel\`a-Ascoli Theorem implies that there exists a function $x_i\in C([0,T])$ and a subsequence $\{x_i^{\epsilon_{k}}\}_{k=1}^\infty\subset\{x_i^\epsilon\}_{\epsilon>0}$, such that $x_i^{\epsilon_{k}}\rightarrow x_i$  in $C([0,T])$ as $k\to \infty$. Then, use a diagonalization argument with respect to $T=1,2,\ldots$ and we obtain a subsequence (still denoted as $x_i^\e$)  of $x_i^\e$ such that $x^\epsilon_i\rightarrow x_i$ in $C([0,T])$ as $\epsilon\rightarrow0$ for any $T>0$.  Moreover, by \eqref{eq:equicontinuous}, we have
\begin{align*}
|x_i(t)-x_i(s)|\leq\frac{1}{2}M_0^2|t-s|.
\end{align*}
Hence, $x_i(t)$ is  a global Lipschitz function and \eqref{eq:lipschitzxj} holds.

(ii). Because $u^{N,\epsilon}(x,t)\rightarrow u^N(x,t)$ and $\partial_xu^{N,\epsilon}(x,t)\rightarrow u_x^N(x,t)$ as $\epsilon\rightarrow0$ for a.e. $(x,t)\in\mathbb{R}\times[0,+\infty)$ (for $(x,t)\neq (x_i(t),t)$), then \eqref{eq:l1conver} follows by Lebesgue dominated convergence Theorem.

(iii). Next, we prove that $u^N$ is a weak solution to the mCH equation.

Obviously, we have
$$u^N\in C([0,+\infty);H^1(\mathbb{R}))\cap L^\infty(0,+\infty; W^{1,\infty}(\mathbb{R})).$$
 Similarly as \eqref{consistence0}, for any test function $\phi\in C_c^\infty(\mathbb{R}\times[0,\infty))$ we have
$$\langle m^N_\epsilon,\phi_t\rangle+\langle U^{N,\epsilon} m^N_\epsilon,\phi_x\rangle=-\int_{\mathbb{R}}\phi(x,0)dm_0^N,$$
where $(m^N_\epsilon$, $m^{N,\epsilon})$ is defined by \eqref{mNepisilon} and $(U^N_\epsilon$, $U^{N,\epsilon})$ is defined by \eqref{UNepisilon}.
By the consistency result \eqref{weakconsistence}, we have
\begin{align}\label{Np1}
\mathcal{L}(u^{N,\epsilon},\phi)+\int_{\mathbb{R}}\phi(x,0)dm^N_0\rightarrow0~\textrm{ as }~\epsilon\rightarrow 0,
\end{align}
where
\begin{align}\label{eq:convergenceterms}
\mathcal{L}(u^{N,\epsilon},\phi)&=\int_0^T\int_{\mathbb{R}}u^{N,\epsilon}(\phi_t-\phi_{txx})dxdt-\frac{1}{3}\int_0^T\int_{\mathbb{R}}(\partial_xu^{N,\epsilon})^3\phi_{xx}dxdt\nonumber\\
&\quad-\frac{1}{3}\int_0^T\int_{\mathbb{R}}(u^{N,\epsilon})^3\phi_{xxx}dxdt+\int_0^T\int_{\mathbb{R}}[(u^{N,\epsilon})^3+u^{N,\epsilon}(\partial_xu^{N,\epsilon})^2]\phi_xdxdt.
\end{align}
(Here, $T=T(\phi)$ and $\phi\in C_c^\infty(\mathbb{R}\times[0,T))$.)
We now consider convergence for each term of $\mathcal{L}(u^{N,\epsilon},\phi)$.

For the first term on the right hand side of \eqref{eq:convergenceterms}, using \eqref{eq:l1conver} and  the fact that supp$\{\phi\}$ is compact  we can see
\begin{align*}
\int_0^T\int_{\mathbb{R}}u^{N,\epsilon}(\phi_t-\phi_{txx})dxdt\rightarrow \int_0^T\int_{\mathbb{R}}u^N(\phi_t-\phi_{txx})dxdt~\textrm{ as }~\epsilon\rightarrow0.
\end{align*}
The second term can be estimated as follows
\begin{align*}
&\quad\left|\int_0^T\int_{\mathbb{R}}[(\partial_xu^{N,\epsilon})^3-(u^N_x)^3]\phi_{xx}dxdt\right|\\
&=\left|\int_0^T\int_{\mathbb{R}}(\partial_xu^{N,\epsilon}-u^N_x)[(\partial_xu^{N,\epsilon})^2+(u^N_x)^2+\partial_xu^{N,\epsilon} u^N_x]\phi_{xx}dxdt\right|\\
&\leq \frac{3}{4}M_0^2||\phi_{xx}||_{L^\infty}\int\int_{\mathrm{supp}\{\phi\}}|\partial_xu^{N,\epsilon}-u^N_x|dxdt\rightarrow0~\textrm{ as }~\epsilon\rightarrow0.
\end{align*}
Similarly, we have the following estimates for the rest terms on the right hand side of \eqref{eq:convergenceterms}:
\begin{align*}
\int_0^T\int_{\mathbb{R}}[(u^{N,\epsilon})^3-(u^N)^3]\phi_{xxx}dxdt\rightarrow0~\textrm{ as }~\epsilon\rightarrow0,\\
\int_0^T\int_{\mathbb{R}}[(u^{N,\epsilon})^3-(u^N)^3]\phi_{x}dxdt\rightarrow0~\textrm{ as }~\epsilon\rightarrow0,
\end{align*}
and
\begin{align*}
&\quad\int_0^T\int_{\mathbb{R}}[u^{N,\epsilon}(\partial_xu^{N,\epsilon})^2-u^N(u^N_x)^2]\phi_xdxdt\\
&=\int_0^T\int_{\mathbb{R}}[(u^{N,\epsilon}-u^N)(\partial_xu^{N,\epsilon})^2+u^N(\partial_xu^{N,\epsilon}+u^N_x)(\partial_xu^{N,\epsilon}-u^N_x)]\phi_xdxdt\\
&\rightarrow0~\textrm{ as }~\epsilon\rightarrow0.
\end{align*}
Hence, the above estimates shows that for any test function $\phi\in C_c^\infty(\mathbb{R}\times[0,\infty))$
\begin{align}\label{Np2}
\mathcal{L}(u^{N,\epsilon},\phi)\rightarrow\mathcal{L}(u^{N},\phi)~\textrm{ as }~\epsilon\rightarrow 0.
\end{align}
Therefore, combining \eqref{Np1} and \eqref{Np2} gives
$$\mathcal{L}(u^{N},\phi)+\int_{\mathbb{R}}\phi(x,0)dm_0^N=0,$$
which implies that $u^N(x,t)$ is a $N$-peakon solution to the mCH equation with initial date $m^N_0(x)$.
\end{proof}

\subsection{A Limiting system of ODEs as $\e\to0$}\label{secunique}
In this section, we derive a system of ODEs to describe  $N$-peakon solutions by letting $\epsilon\rightarrow0$ in \eqref{approximateODE2}. 
First, we give an important lemma.
\begin{lemma}\label{convergencetoNpeakon}
The following equality holds
$$\lim_{\epsilon\rightarrow 0}(\rho_\epsilon\ast (G_x^\epsilon)^2)(0)=\frac{1}{12}.$$
\end{lemma}
\begin{proof}
Set $F(y)=\int_{-\infty}^y\rho(x)dx.$ Because $\rho$ is an even function, we have
$$F(-y)=\int_{-\infty}^{-y}\rho(x)dx=\int_y^{\infty}\rho(x)dx.$$
Therefore, 
\begin{align}\label{suma1}
F(y)+F(-y)=\int_{-\infty}^y\rho(x)dx+\int_y^{\infty}\rho(x)dx=1.
\end{align}
 Furthermore, we have
$$F(+\infty)=1,\quad F(-\infty)=0.$$
Due to $\rho_\epsilon(x)=\rho_\epsilon(-x)$, we can obtain
\begin{align*}
I_\epsilon:=(\rho_\epsilon\ast (G_x^\epsilon)^2)(0)&=\int_{\mathbb{R}}\rho_\epsilon(y)\left(\int_{\mathbb{R}}\frac{1}{2}e^{-|x-y|}\rho'_\epsilon(x)dx\right)^2dy\\
&=\frac{1}{4}\int_{\mathbb{R}}\rho(y)\left(\frac{1}{\epsilon}\int_{-\infty}^ye^{\epsilon(x-y)}\rho'(x)dx+\frac{1}{\epsilon}\int^{\infty}_ye^{\epsilon(y-x)}\rho'(x)dx\right)^2dy\\
&=\frac{1}{4}\int_{\mathbb{R}}\rho(y)\left(\int_{-\infty}^ye^{-\epsilon|x-y|}\rho(x)dx-\int^{\infty}_ye^{-\epsilon|x-y|}\rho(x)dx\right)^2dy.
\end{align*}
Then, by using Lebesgue dominated convergence Theorem and \eqref{suma1} we have
\begin{align*}
\lim_{\epsilon\rightarrow0}I_\epsilon&=\frac{1}{4}\int_{\mathbb{R}}\rho(y)\left(\int_{-\infty}^y\rho(x)dx-\int^{\infty}_y\rho(x)dx\right)^2dy\\
&=\frac{1}{4}\int_{\mathbb{R}}\rho(y)(F(y)-F(-y))^2dy=\frac{1}{4}\int_{-\infty}^\infty F'(y)(1-2F(y))^2dy\\
&=\frac{1}{4}\int_{-\infty}^\infty F'(y)-2(F^2(y))'+\frac{4}{3}(F^3(y))'dy\\
&=\frac{1}{4}\left(F(+\infty)-2F^2(+\infty)+\frac{4}{3}F^3(+\infty)\right)=\frac{1}{12}.
\end{align*}

\end{proof}
\begin{remark}
The above limit is independent of the mollifier $\rho$ and intrinsic to the mCH equation \eqref{mCH}. Consider one peakon solution $pG(x-x(t))$. To obtain the correct speed for $x(t)$, the right value for $G_x^2$ at $0$ is the limit obtained by Lemma \ref{convergencetoNpeakon}:
$$(G_x^2)(0)=\frac{1}{12}.$$
By the jump condition for piecewise smooth weak solutions to \eqref{mCH} in \cite[Equation (2.2)]{GaoLiLiu}, the speed for $x(t)$ should be
\[
\frac{dx(t)}{dt}=G^2(0)-\frac{1}{3}[G_x^2(0+)+G_x(0+)G_x(0-)+G_x^2(0-)],
\]
implying that the correct value of $G_x^2$ at $0$ is
\[
\frac{1}{3}[G_x^2(0+)+G_x(0+)G_x(0-)+G_x^2(0-)]=\frac{1}{12},
\]
which agrees with the limit obtained by Lemma \ref{convergencetoNpeakon}.
This is different from the precise representative of the BV function $G_x^2$ at the discontinuous point $0$
\[
\frac{1}{2}[G_x^2(0-)+G_x^2(0+)]=\frac{1}{4}.
\]

\end{remark}
Next, we use Lemma \ref{convergencetoNpeakon} to obtain the system of ODEs to describe $N$-peakon solutions by letting $\epsilon\rightarrow0$ in \eqref{approximateODE2}.

\begin{proposition}\label{discontinue}
For any constants $\{p_i\}_{i=1}^N$, $\{x_i\}_{i=1}^N\subset\mathbb{R}$ (note that $x_i$ are fixed compared with $x_i^\e(t)$ in \eqref{uNepisilon}),  denote $\mathcal{N}_{i1}:=\{1\leq j\leq N:x_j\neq x_i\}$ and $\mathcal{N}_{i2}:=\{1\leq j\leq N:x_j=x_i\}$ for $1\leq i\leq N$. Set
$$u^{N,\epsilon}(x):=\sum_{j=1}^Np_jG^\epsilon(x-x_j),$$
and
$$U^\epsilon(x):=[\rho_\epsilon\ast (u^{N,\epsilon})^2](x)-[\rho_\epsilon\ast (u_x^{N,\epsilon})^2](x).$$
(Note that $x_i$ are constants in $U^\e(x)$ comparing with $U^{N,\e}(x,t)$ defined by \eqref{UNepisilon}.) Then we have
\begin{align}\label{Npeakonequation}
\lim_{\epsilon\rightarrow0}U^\epsilon(x_i)&=\left(\sum_{j=1}^Np_jG(x_i-x_j)\right)^2-\left(\sum_{j\in \mathcal{N}_{i1}}p_j G_x(x_i-x_j)\right)^2-\frac{1}{12}\left(\sum_{k\in \mathcal{N}_{i2}}p_k\right)^2.
\end{align}
\end{proposition}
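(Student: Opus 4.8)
The plan is to compute the limit of $U^\epsilon(x_i)$ by splitting it into the contribution from $[\rho_\epsilon \ast (u^{N,\epsilon})^2](x_i)$ and the contribution from $[\rho_\epsilon \ast (u_x^{N,\epsilon})^2](x_i)$, and then expanding each square into self-interaction terms (index $j=k$) and cross-interaction terms (index $j \neq k$). The key structural observation is that the singular behavior as $\epsilon \to 0$ is entirely local: the mollifier $\rho_\epsilon$ concentrates at $x_i$, and since $G^\epsilon$ and $G^\epsilon_x$ are continuous and converge locally uniformly to $G$ and $G_x$ away from the origin, every term involving a peakon at a distinct location $x_j \neq x_i$ passes to the limit by Lebesgue dominated convergence, simply evaluating $G(x_i - x_j)$ or $G_x(x_i - x_j)$ at the limiting point. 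The only terms that do not converge this way are those built from peakons sitting exactly at $x_i$, i.e.\ indices in $\mathcal{N}_{i2}$, because $G_x$ is discontinuous at $0$ and the naive pointwise evaluation $G_x(0)$ is ambiguous.

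First I would handle the $(u^{N,\epsilon})^2$ term. Since $G^\epsilon \to G$ locally uniformly on all of $\mathbb{R}$ (including at $0$, where $G$ is continuous), and $u^{N,\epsilon}(x) \to \sum_j p_j G(x - x_j)$ uniformly on compact sets, the double mollification gives
\[
\lim_{\epsilon \to 0}[\rho_\epsilon \ast (u^{N,\epsilon})^2](x_i) = \left(\sum_{j=1}^N p_j G(x_i - x_j)\right)^2,
\]
which accounts for the first term on the right-hand side of \eqref{Npeakonequation}. Next I would attack the $(u_x^{N,\epsilon})^2$ term, which is the delicate one. I would split the sum $u_x^{N,\epsilon}(x) = \sum_{j \in \mathcal{N}_{i1}} p_j G^\epsilon_x(x - x_j) + \sum_{k \in \mathcal{N}_{i2}} p_k G^\epsilon_x(x - x_i)$ and expand the square into three pieces: the cross block squared, the coincident block squared, and twice the mixed product. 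For the cross block squared, dominated convergence yields $\big(\sum_{j \in \mathcal{N}_{i1}} p_j G_x(x_i - x_j)\big)^2$, matching the second term. For the mixed product, one factor is $G^\epsilon_x(x - x_j)$ with $x_j \neq x_i$, which is uniformly bounded and converges to the finite value $G_x(x_i - x_j)$ near $x = x_i$, while the other factor $\sum_k p_k G^\epsilon_x(x - x_i)$ remains bounded but after the outer convolution $\rho_\epsilon \ast (\,\cdot\,)$ and evaluation at $x_i$ its odd-symmetry forces the limit to vanish; here the key point is that $\rho_\epsilon \ast G^\epsilon_x$ evaluated at the center is an odd quantity integrated against the even mollifier, so the mixed term drops out. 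Finally, for the coincident block squared, $\big(\sum_{k \in \mathcal{N}_{i2}} p_k\big)^2 [\rho_\epsilon \ast (G^\epsilon_x)^2](x_i)$, I invoke Lemma \ref{convergencetoNpeakon} directly, giving the factor $\frac{1}{12}\big(\sum_{k \in \mathcal{N}_{i2}} p_k\big)^2$ and producing the third term.

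The main obstacle, and the step requiring the most care, is the rigorous justification that the cross terms and the mixed product behave as claimed, and in particular that the \emph{only} surviving singular contribution comes from the coincident block through Lemma \ref{convergencetoNpeakon}. This requires a careful change of variables centered at $x_i$ so that the double mollification $\rho_\epsilon \ast (G^\epsilon_x(\cdot - x_i))^2$ at $x_i$ is literally the quantity $(\rho_\epsilon \ast (G^\epsilon_x)^2)(0)$ of the lemma, together with a uniform bound $\|G^\epsilon_x\|_{L^\infty} \le \frac{1}{2}$ (already established in the proof of Theorem \ref{thm:fixedN}) to dominate all remaining integrals. I expect the cleanest route is to isolate each $G^\epsilon_x(x - x_j)$ with $x_j \neq x_i$ as a factor that is continuous at $x = x_i$ with a definite limit, so that only the self-term of each coincident peakon carries a genuine discontinuity; all cross products between a coincident peakon and a distinct peakon then reduce, after the even-mollifier convolution, to evaluations of continuous functions and pick up no anomalous $\frac{1}{12}$ factor. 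Assembling the three limits then yields \eqref{Npeakonequation}.
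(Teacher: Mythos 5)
Your proposal is correct and follows essentially the same route as the paper: the same three-way decomposition of $(u_x^{N,\epsilon})^2$ into the $\mathcal{N}_{i1}$-block, the mixed block, and the $\mathcal{N}_{i2}$-block (the paper's $F_1^\epsilon+F_2^\epsilon+F_3^\epsilon$), with continuity of $G$ (resp.\ of $G_x$ away from $0$) handling the first two limits, oddness of $G_x^\epsilon$ combined with a near/far splitting of the mollifier integral killing the mixed term, and Lemma \ref{convergencetoNpeakon}, after recentering at $x_i$, producing the $\frac{1}{12}$ coefficient. The only cosmetic difference is in the mixed term: the paper antisymmetrizes the integrand first and then does the near/far estimate, whereas you freeze the continuous factor at its limit $G_x(x_i-x_j)$ and let oddness annihilate the frozen part, with the near/far domination controlling the remainder --- the ingredients are identical.
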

\begin{proof}
Because $\sum_{j=1}^Np_jG(x-x_j)$ is continuous, we have
\begin{align}\label{firstterm}
\lim_{\epsilon\rightarrow0}\rho_\epsilon\ast (u^{N,\epsilon})^2(x_i)=\left(\sum_{j=1}^Np_jG(x_i-x_j)\right)^2.
\end{align}
Next we estimate the second term $[\rho_\epsilon\ast (u_x^{N,\epsilon})^2](x_i)$ in $U^\epsilon(x_i)$.
We have
\begin{align}\label{secondterm0}
&\quad(u_x^{N,\epsilon})^2(x)=\left(\sum_{j\in \mathcal{N}_{i1}}p_j G_x^\epsilon(x-x_j)\right)^2+2\sum_{j\in \mathcal{N}_{i1},k\in \mathcal{N}_{i2}}p_j G_x^\epsilon(x-x_j)p_kG^\epsilon_x(x-x_k)\nonumber\\
&\quad+\left(\sum_{k\in \mathcal{N}_{i2}}p_k G_x^\epsilon(x-x_k)\right)^2=:F^\epsilon_1(x)+F^\epsilon_2(x)+F^\epsilon_3(x).
\end{align}
Because  $G_x(x)$ is continuous at $x_i-x_j$, we have the following estimate for $F^\epsilon_1$
\begin{align}\label{1111}
\lim_{\epsilon\rightarrow0}(\rho_\epsilon\ast F^\epsilon_1)(x_i)=\left(\sum_{j\in \mathcal{N}_{i1}}p_j G_x(x_i-x_j)\right)^2.
\end{align}
Because $G$ and $\rho_\epsilon$ are even functions, we know $G^\epsilon_x$ is an odd function. Next, consider  the second term $F^\epsilon_2$ on the right hand side of \eqref{secondterm0}. Due to $x_k=x_i$ for $k\in \mathcal{N}_{i2}$, we have

\begin{align}\label{222}
(\rho_\epsilon\ast F^\epsilon_2)(x_i)=&2\sum_{j\in \mathcal{N}_{i1},k\in \mathcal{N}_{i2}}p_jp_k\int_{\mathbb{R}}\rho_\epsilon(x_i-y)G_x^\epsilon(y-x_j)G_x^\epsilon(y-x_i)dy\nonumber\\
=&2\sum_{j\in \mathcal{N}_{i1},k\in \mathcal{N}_{i2}}p_jp_k\int_0^\infty\rho_\epsilon(y)G_x^\epsilon(-y)\nonumber\\
&\times\left(\int_{\mathbb{R}}\Big[G_x(x_i-x_j-y-x)-G_x(x_i-x_j+y-x)\Big]\rho_\epsilon(x)dx\right)dy\nonumber\\
\leq &2\sum_{j\in \mathcal{N}_{i1},k\in \mathcal{N}_{i2}}p_jp_k\int_0^{\sqrt{\epsilon}}\rho_\epsilon(y)G_x^\epsilon(-y)\nonumber\\
& \times\left(\int_{-\sqrt{\epsilon}}^{\sqrt{\epsilon}}\Big|G_x(x_i-x_j-y-x)-G_x(x_i-x_j+y-x)\Big|\rho_\epsilon(x)dx\right)dy\nonumber\\
&+ 3\sum_{j\in \mathcal{N}_{i1},k\in \mathcal{N}_{i2}}p_jp_k\int_{\sqrt{\epsilon}}^\infty\rho_\epsilon(y)dy=:I^\e_1+I^\e_2.
\end{align}
Due to $x_j\neq x_i$ for $j\in \mathcal{N}_{i1}$, we can choose $\epsilon$ small enough such that
$$(x_i-x_j-y-x)(x_i-x_j+y-x)>0,\textrm{ for }|x|,|y|<\sqrt{\epsilon}.$$
Hence,
$$|G_x(x_i-x_j-y-x)-G_x(x_i-x_j+y-x)|\leq\frac{1}{2}|2y|<\sqrt{\epsilon}.$$
Putting the above estimate into $I^\e_1$ gives
\begin{align}\label{eq:221}
I_1^\e&=2\sum_{j\in \mathcal{N}_{i1},k\in \mathcal{N}_{i2}}p_jp_k\int_0^{\sqrt{\epsilon}}\rho_\epsilon(y)G_x^\epsilon(-y)\nonumber\\
&\qquad \times\left(\int_{-\sqrt{\epsilon}}^{\sqrt{\epsilon}}\Big|G_x(x_i-x_j-y-x)-G_x(x_i-x_j+y-x)\Big|\rho_\epsilon(x)dx\right)dy\nonumber\\
&\leq \sum_{j\in \mathcal{N}_{i1},k\in \mathcal{N}_{i2}}|p_jp_k|\cdot\sqrt{\epsilon}\rightarrow 0~\textrm{ as }~\epsilon\rightarrow0.
\end{align}
For $I_2^\epsilon$, changing variable gives
\begin{align}\label{eq:222}
I_2^\e&= 3\sum_{j\in \mathcal{N}_{i1},k\in \mathcal{N}_{i2}}p_jp_k\int_{\sqrt{\epsilon}}^\infty\rho_\epsilon(y)dy\nonumber\\
&= 3\sum_{j\in \mathcal{N}_{i1},k\in \mathcal{N}_{i2}}p_jp_k\int_{\frac{1}{\sqrt{\epsilon}}}^\infty\rho(y)dy\rightarrow 0~\textrm{ as }~\epsilon\rightarrow0.
\end{align}
Combining \eqref{222}, \eqref{eq:221}, and \eqref{eq:222}, we have
\begin{align}\label{2222}
\lim_{\e\to0}|(\rho_\epsilon\ast F^\epsilon_2)(x_i)|=0.
\end{align}
For $F^\epsilon_3$ in \eqref{secondterm0},  using Lemma \ref{convergencetoNpeakon} we can obtain
\begin{align}\label{3333}
\lim_{\epsilon\rightarrow0}(\rho_\epsilon\ast F_3^\epsilon)(x_i)
&=\lim_{\epsilon\rightarrow0}\int_{\mathbb{R}}\rho_\epsilon(x_i-y)\left(\sum_{k\in \mathcal{N}_{i2}}p_k \int_{\mathbb{R}}G(y-x_k-x)\rho_\epsilon(x)dx\right)^2dy\nonumber\\
&=\left(\sum_{k\in \mathcal{N}_{i2}}p_k\right)^2\lim_{\epsilon\rightarrow0}\int_{\mathbb{R}}\rho_\epsilon(y)\bigg(\int_{\mathbb{R}}G(y-x)\rho_\epsilon(x)dx\bigg)^2dy\nonumber\\
&=\left(\sum_{k\in \mathcal{N}_{i2}}p_k\right)^2\lim_{\epsilon\rightarrow0}\left[(G_x^{\epsilon})^2\ast\rho_\epsilon\right](0)\nonumber\\
&=\frac{1}{12}\left(\sum_{k\in \mathcal{N}_{i2}}p_k\right)^2,
\end{align}
where we used $x_i=x_k$ for $k\in \mathcal{N}_{i2}$ in the second step.
Finally, combining \eqref{1111}, \eqref{2222} and \eqref{3333} gives
\begin{align}\label{secondterm}
\lim_{\epsilon\rightarrow0}[\rho_\epsilon\ast (u_x^{N,\epsilon})^2](x_i)=\frac{1}{12}\left(\sum_{k\in \mathcal{N}_{i2}}p_k\right)^2+\left(\sum_{j\in\mathcal{N}_{i1}}p_j G_x(x_i-x_j)\right)^2.
\end{align}
Combining \eqref{firstterm} and \eqref{secondterm} gives  \eqref{Npeakonequation}.

\end{proof}

\begin{remark}[System of ODEs]\label{ODEsfor trajiecories}
From Proposition \ref{discontinue}, we give a system of ODEs to describe  $N$-peakon solution $u^N(x,t)=\sum_{i=1}^Np_iG(x-x_i(t)).$
For $1\leq i\leq N$, set $\mathcal{N}_{i1}(t)=\{1\leq j\leq N:x_j(t)\neq x_i(t)\}$ and $\mathcal{N}_{i2}(t)=\{1\leq j\leq N:x_j(t)=x_i(t)\}$. The  system of ODEs is given by, $1\leq i\leq N,$
\begin{footnotesize}
\begin{align}\label{NpeakontrajectoriesODE}
\frac{d}{dt}x_i(t)=\left(\sum_{j=1}^Np_jG(x_i(t)-x_j(t))\right)^2-\left(\sum_{j\in \mathcal{N}_{i1}(t)}p_j G_x(x_i(t)-x_j(t))\right)^2-\frac{1}{12}\left(\sum_{k\in \mathcal{N}_{i2}(t)}p_k\right)^2.
\end{align}
\end{footnotesize}
Before the collisions of peakons, we can deduce \eqref{eq:Npeakonmonoticity0} from \eqref{NpeakontrajectoriesODE}.
\end{remark}
\begin{remark}[nonuniqueness]\label{rmk:nonuniqueness}
Consider the initial two peakons $p_1\delta(x-x_1(0))+p_2\delta(x-x_2(0))$ with $x_1(0)<x_2(0)$ and $0<p_2<p_1$. Before collision, the evolution system for $x_1(t)$ and $x_2(t)$ is given by
\begin{gather}\label{eq:twobefore}
\left\{
\begin{split}
\frac{d}{dt}x_1(t)=\frac{1}{6}p_1^2+\frac{1}{2}p_1p_2e^{x_1(t)-x_2(t)},\\
\frac{d}{dt}x_2(t)=\frac{1}{6}p_2^2+\frac{1}{2}p_1p_2e^{x_1(t)-x_2(t)}.
\end{split}
\right.
\end{gather}
This system is the same as \eqref{eq:Npeakonmonoticity0} for $N=2$.
The relative speed of the first peakon with respect to the second one is $\frac{1}{6}(p_1^2-p_2^2)>0$. Hence, they will collide at finite time $T_*=\frac{6(x_2(0)-x_1(0))}{p_1^2-p_2^2}$. 
When $t>T_*$, if we assume the two peakons sticky together, according to \eqref{NpeakontrajectoriesODE} the evolution equation is given by
\begin{align}\label{eq:twopeakonsticky}
\frac{d}{dt}x_i(t)=\frac{1}{6}(p_1+p_2)^2,~t>T_*,~~i=1,2.
\end{align}
The peakon solution $u(x,t)=p_1G(x-x_1(t))+p_2G(x-x_2(t))$ constructed by \eqref{eq:twobefore} and \eqref{eq:twopeakonsticky} corresponds to the sticky peakon weak solution given by \cite{GaoLiu}. In next section, we will prove that when $N=2$, the limiting peakon solution (for $t>T_*$) given by Theorem \ref{thm:fixedN} also corresponds to $u(x,t)$, which means it is a sticky peakon weak solution.

If we assume the two peakons cross each other when $t>T_*$ (still with amplitudes $p_1$, $p_2$), then according to \eqref{NpeakontrajectoriesODE}, the evolution equation for $x_1(t)$ and $x_2(t)$ is given by
\begin{gather}\label{eq:twoafter}
\left\{
\begin{split}
\frac{d}{dt}x_1(t)=\frac{1}{6}p_1^2+\frac{1}{2}p_1p_2e^{x_2(t)-x_1(t)},~~t>T_*,\\
\frac{d}{dt}x_2(t)=\frac{1}{6}p_2^2+\frac{1}{2}p_1p_2e^{x_2(t)-x_1(t)},~~t>T_*.
\end{split}
\right.
\end{gather}
This system is different with \eqref{eq:Npeakonmonoticity0} and one can easily check that, $\bar{u}(x,t)=p_1G(x-\bar{x}_1(t))+p_2G(x-\bar{x}_2(t))$ constructed by \eqref{eq:twobefore} and \eqref{eq:twoafter} is a weak solution while $\tilde{u}(x,t)=p_1G(x-\tilde{x}_1(t))+p_2G(x-\tilde{x}_2(t))$ constructed by  \eqref{eq:Npeakonmonoticity0} is not a weak solution for $t>T_*$.

Both $u(x,t)$ and $\bar{u}(x,t)$ are global two peakon solutions, which proves nonuniqueness of weak solutions to the mCH equation.
 This nonuniqueness example can also be found in \cite[Proposition 4.4]{GaoLiu}.
 
The above example also shows that after collision peakons can merge into one or cross each other. Moreover, if we view $T_*$ as the start point with one peakon, then the above example shows the scattering of one peakon. This indicates all the situation mentioned in question (iii) in Introduction.
\end{remark}

At the end of this section, we give a useful proposition.
\begin{proposition}\label{pro:Npeakon}
	Let $x_i(t)$, $1\leq i\leq N$, be $N$ Lipschitz functions in $[0,T)$ with $x_1(t)< x_2(t)<\cdots<x_N(t)$ and $p_1,\cdots,p_N$ are $N$ non-zero constants. Then, $u^N(x,t):=\sum_{i=1}^Np_iG(x-x_i(t))$ is a weak solution to the mCH equation if and only if $x_i(t)$ satisfies \eqref{eq:Npeakonmonoticity0}.
\end{proposition}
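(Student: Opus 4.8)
The plan is to exploit that $u^N$ is continuous in $x$ and smooth away from the strictly separated trajectories, so the weak formulation collapses to a single scalar condition along each curve $\Gamma_i=\{x=x_i(t)\}$, which I then match against \eqref{eq:Npeakonmonoticity0}. Since $G_{xx}=G$ for $x\neq0$, on each open space-time strip $\Omega_k=\{(x,t):x_k(t)<x<x_{k+1}(t)\}$ (with $x_0\equiv-\infty$, $x_{N+1}\equiv+\infty$) the density $m^N=u^N-u^N_{xx}$ vanishes, so $u^N$ solves \eqref{mCH} classically there. Accordingly I would split $\mathcal{L}(u^N,\phi)$ from \eqref{weakfunctional} into integrals over the $\Omega_k$ and integrate by parts on each strip. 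The interior contributions cancel because the classical equation holds, the $t=0$ boundaries assemble into $-\int_{\mathbb R}\phi(x,0)\,dm_0^N$, and what remains is a sum of line integrals along the $\Gamma_i$. Because $u^N$, and hence $(u^N)^2,(u^N)^3$, is continuous across each $\Gamma_i$, the boundary contributions carrying $\phi_{xx},\phi_{tx},\phi_{xxx}$ telescope away and only the jump of $u_x^N$ survives; the arbitrariness of $\phi$ on $\Gamma_i$ then forces exactly one identity per index $i$.

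That identity is the Rankine--Hugoniot/jump condition for piecewise-smooth weak solutions recorded in the Remark after Lemma \ref{convergencetoNpeakon} (cf. \cite[Eq.~(2.2)]{GaoLiLiu}),
\[
\dot{x}_i=\big(u^N(x_i)\big)^2-\tfrac13\big[(u_x^N(x_i^+))^2+u_x^N(x_i^+)\,u_x^N(x_i^-)+(u_x^N(x_i^-))^2\big].
\]
Here $u^N(x_i)=\sum_{j}p_jG(x_i-x_j)$ is unambiguous, while $G_x(0^\pm)=\mp\tfrac12$ together with the strict ordering (so $G_x(x_i-x_j)$ is continuous for $j\neq i$) give $u_x^N(x_i^\pm)=A_i\mp\tfrac12 p_i$ with $A_i:=\sum_{j\neq i}p_jG_x(x_i-x_j)$. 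A one-line computation yields $\tfrac13[(A_i-\tfrac12p_i)^2+(A_i-\tfrac12p_i)(A_i+\tfrac12p_i)+(A_i+\tfrac12p_i)^2]=A_i^2+\tfrac1{12}p_i^2$, so the jump condition becomes
\[
\dot{x}_i=\Big(\sum_{j}p_jG(x_i-x_j)\Big)^2-\Big(\sum_{j\neq i}p_jG_x(x_i-x_j)\Big)^2-\tfrac1{12}p_i^2,
\]
which is precisely \eqref{NpeakontrajectoriesODE} with $\mathcal N_{i1}=\{j\neq i\}$ and $\mathcal N_{i2}=\{i\}$.

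Finally I would insert the ordering $x_1<\cdots<x_N$ to replace $G,G_x$ by $\tfrac12e^{\pm(\cdot)}$. Writing $S_-=\sum_{j<i}p_je^{x_j-x_i}$ and $S_+=\sum_{j>i}p_je^{x_i-x_j}$, the factoring $(p_i+S_-+S_+)^2-(S_+-S_-)^2=(p_i+2S_+)(p_i+2S_-)$ together with $\tfrac14-\tfrac1{12}=\tfrac16$ turns the right-hand side into $\tfrac16p_i^2+\tfrac12 p_iS_-+\tfrac12 p_iS_++S_-S_+$; the cross term $S_-S_+$ is exactly $\sum_{m<i<n}p_mp_ne^{x_m-x_n}$, and this recovers \eqref{eq:Npeakonmonoticity0}. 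Since the whole reduction is an equivalence (the line integrals vanish for every $\phi$ iff each scalar identity holds), both directions of the ``if and only if'' follow simultaneously.

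I expect the main obstacle to be the bookkeeping of the boundary line integrals: the functional contains third-order terms ($\phi_{xxx}$, $\phi_{txx}$) and the cubic $u_x^3$, so several curve contributions appear, and one must verify that continuity of $u^N$ across $\Gamma_i$ annihilates all of them except the single velocity condition (the amplitudes $p_i$ need no separate condition, since the $m$-equation is a continuity equation transporting each point mass and thus preserving $p_i$ automatically). In particular, the seemingly ambiguous value of the discontinuous $(u_x^N)^2$ at $x_i$ is pinned down by the $u_x^3\phi_{xx}$ term, which supplies the symmetric mean $\tfrac13(a^2+ab+b^2)$ of the one-sided limits $a=u_x^N(x_i^+)$, $b=u_x^N(x_i^-)$ --- the same mechanism underlying \eqref{correctdefinition} and Lemma \ref{convergencetoNpeakon}.
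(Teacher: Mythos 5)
Your proposal is correct, and the algebra in it checks out: with $u_x^N(x_i^{\pm})=A_i\mp\tfrac12 p_i$ one indeed gets $\tfrac13\bigl[(u_x^N(x_i^+))^2+u_x^N(x_i^+)u_x^N(x_i^-)+(u_x^N(x_i^-))^2\bigr]=A_i^2+\tfrac1{12}p_i^2$, and the factoring $(p_i+S_-+S_+)^2-(S_+-S_-)^2=(p_i+2S_-)(p_i+2S_+)$ correctly reduces the jump condition to \eqref{eq:Npeakonmonoticity0}. However, your route is genuinely different from the paper's. The paper never passes through one-sided limits of $u_x^N$ or the symmetric mean $\tfrac13(a^2+ab+b^2)$: it integrates by parts in $x$ only, at fixed $t$, over the intervals $(x_i(t),x_{i+1}(t))$, where $u^N$ is an explicit combination of exponentials satisfying $u^N=u^N_{xx}$, and computes directly that $I_1=\int_0^T\sum_i p_i\phi_t(x_i(t),t)\,dt$ and $I_2=\int_0^T\sum_i p_i\phi_x(x_i(t),t)F(t)\,dt$ with $F$ already in the form of the right-hand side of \eqref{eq:Npeakonmonoticity0}; the equivalence then reads off from $\mathcal{L}(u^N,\phi)+\int\phi(x,0)\,dm_0^N=\int_0^T\sum_i p_i\phi_x(x_i(t),t)\bigl(F(t)-\tfrac{d}{dt}x_i(t)\bigr)dt$. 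Your argument instead uses space-time strips, the classical equation in their interiors, and the Rankine--Hugoniot condition of \cite[Eq.~(2.2)]{GaoLiLiu}, followed by an algebraic identification. What the paper's computation buys is self-containedness (no appeal to an external jump condition, no bookkeeping of which boundary line integrals survive). What your route buys is conceptual clarity: it exhibits the velocity condition as a genuine jump condition, explains the appearance of the $\tfrac1{12}$ consistent with Lemma \ref{convergencetoNpeakon}, and, as a by-product, actually proves the equivalence of \eqref{NpeakontrajectoriesODE} and \eqref{eq:Npeakonmonoticity0} under strict ordering, which the paper asserts without proof. Two caveats: your boundary-term bookkeeping is only sketched, so as written the proof is complete only modulo the citation of \cite{GaoLiLiu}; and the final localization step (choosing $\phi$ supported near a single curve $\Gamma_i$ to extract one scalar identity per $i$, using $p_i\neq0$) deserves a sentence, though the paper leaves the same step implicit.
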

\begin{proof}
	Obviously, we have
	$$u^N\in C([0,T);H^1(\mathbb{R}))\cap L^\infty(0,T; W^{1,\infty}(\mathbb{R})).$$
	In the following proof we denote $u:=u^N$. For any   test function $\phi\in C_c^\infty(\mathbb{R}\times[0,T))$,  let
	\begin{align}\label{weakfunctional2}
	\mathcal{L}(u,\phi)&=\int_0^{T}\int_{\mathbb{R}}u(\phi_t-\phi_{txx})dxdt-\int_0^{T}\int_{\mathbb{R}}\left[\frac{1}{3}(u^3_x\phi_{xx}+u^3\phi_{xxx})-(u^3+uu_x^2)\phi_x\right]dxdt\nonumber\\
	&=:I_1+I_2.
	\end{align}
	Denote $x_0:=-\infty$, $x_{N+1}:=+\infty$ and $p_0=p_{N+1}=0$.
	By integration by parts for space variable $x$, we calculate $I_1$ as
	\begin{align}\label{I11}
	I_1&=\int_0^{T}\int_{\mathbb{R}}u(\phi_t-\phi_{txx})dxdt=\sum_{i=0}^{N}\int_0^{T}\int_{x_i}^{x_{i+1}}u(\phi_t-\phi_{txx})dxdt\nonumber\\
	&=\sum_{i=0}^{N}\int_0^{T}\int_{x_i}^{x_{i+1}}\left(\frac{1}{2}\sum_{j\leq i}p_je^{x_j-x}+\frac{1}{2}\sum_{j>i}p_je^{x-x_j}\right)(\phi_t-\phi_{txx})dxdt\nonumber\\
	&=\int_0^{T}\sum_{i=1}^Np_i\phi_t(x_i(t),t)dt.
	\end{align}
	Similarly, for $I_2$ we have
	\begin{align}\label{I21}
	I_2&=-\int_0^{T}\int_{\mathbb{R}}\left[\frac{1}{3}(u^3_x\phi_{xx}+u^3\phi_{xxx})-(u^3+uu_x^2)\phi_x\right]dxdt\nonumber\\
	&=\int_0^{T}\sum_{i=1}^Np_i\phi_x(x_i(t))\left(\frac{1}{6}p_i^2+\frac{1}{2}\sum_{j<i}p_ip_je^{x_j-x_i}+\frac{1}{2}\sum_{j>i}p_ip_je^{x_i-x_j}\right.\nonumber\\
	&\left.\qquad\qquad\qquad\qquad\qquad\qquad+\sum_{1\leq m<i<n\leq N}p_mp_ne^{x_m-x_n}\right)dt\nonumber\\
	&=\int_0^{T}\sum_{i=1}^Np_i\phi_x(x_i(t))F(t)dt.
	\end{align}
	where 
	$$F(t):=\frac{1}{6}p_i^2+\frac{1}{2}\sum_{j<i}p_ip_je^{x_j-x_i}+\frac{1}{2}\sum_{j>i}p_ip_je^{x_i-x_j}+\sum_{1\leq m<i<n\leq N}p_mp_ne^{x_m-x_n}.$$
	Combining  \eqref{weakfunctional2}, \eqref{I11} and \eqref{I21} gives
	\begin{align}\label{useful}
	\mathcal{L}(u,\phi)&=\sum_{i=1}^Np_i\int_0^{T}\frac{d}{dt}\phi(x_i(t),t)dt+\int_0^{T}\sum_{i=1}^Np_i\phi_x(x_i(t))\left(F(t)-\frac{d}{dt}x_i(t)\right)dt\nonumber\\
	&=-\int_{\mathbb{R}}\phi(x,0)dm_0^N+\int_0^{T}\sum_{i=1}^Np_i\phi_x(x_i(t))\left(F(t)-\frac{d}{dt}x_i(t)\right)dt.
	\end{align}
	By Definition \ref{weaksolution} we know $u^N$ is a weak solution if and only if $\frac{d}{dt}x_i(t)=F(t)$, which is \eqref{eq:Npeakonmonoticity0}.
	
\end{proof}
\begin{remark}\label{rmk:rightequation}
Form Remark \ref{rmk:nonuniqueness} and Proposition \ref{pro:Npeakon}, we know that solutions to \eqref{eq:Npeakonmonoticity0} can not be used to construct peakon weak solutions after $t>T_*$. Because $x_1(t)>x_2(t)$ when $t>T_*$, \eqref{eq:twoafter} is the right evolution equation for $x_i(t)$, $i=1,2$. 

Proposition \ref{pro:Npeakon} also implies the uniqueness of the limiting trajectories $x_i(t)$ before collision.
\end{remark}

\section{Limiting peakon solutions as $\epsilon\to 0$} \label{sec:stickylim}
In this section, we analysis peakon solutions given by the dispersive regularization.

\subsection{No collisions for the regularized system}\label{subsec:nocollision}
In this subsection, we show that trajectories $\{x_i^\e(t)\}_{i=1}^N$ obtained by \eqref{approximateODE2} will never collide.
Define
\begin{gather}\label{eq:defoffie}
f_1^{\e}(x):=\frac{1}{2}\int_0^{\infty}\rho_{\epsilon}(x-y)e^{-y}dy~\textrm{ and }~f_2^{\e}(x):=\frac{1}{2}\int_{-\infty}^{0}\rho_{\epsilon}(x-y)e^{y}dy.
\end{gather}
Changing variable gives
\begin{gather}\label{eq:defoffie1}
f_1^{\e}(x)=\frac{1}{2}\int_{-\infty}^x\rho_{\epsilon}(y)e^{y-x}dy~\textrm{ and }~f_2^{\e}(x)=\frac{1}{2}\int_{x}^{\infty}\rho_{\epsilon}(y)e^{x-y}dy.
\end{gather}
It is easy to see that both $f_1^{\e}, f_2^{\e}\in C^{\infty}(\mathbb{R})$ and we have the following lemma.
\begin{lemma}\label{lmm:noncollision}
Let $C_0:=||\rho||_{L^\infty}$.  Then, the following properties for $f_i^\e$ ($i=1,2$) hold:

$\mathrm{(i)}$
\begin{align}\label{eq:fiproperties1}
f_2^{\e}(x)=f_1^{\e}(-x),~~G^{\e}(x)=f_1^{\e}+f_2^{\e},~\textrm{ and }~G_x^{\e}(x)=-f_1^{\e}(x)+f_2^{\e}(x).
\end{align}

$\mathrm{(ii)}$
\begin{gather}\label{eq:fiproperties2}
||f_1^\e||_{L^\infty},||f_2^\e||_{L^\infty}\leq \frac{1}{2},~\textrm{ and }~||\partial_xf_1^\e||_{L^\infty},||\partial_xf_2^\e||_{L^\infty}\leq \frac{C_0}{2\e}+\frac{1}{2}.
\end{gather}

\end{lemma}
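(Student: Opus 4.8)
The plan is to prove both parts by direct computation, exploiting the changed-variable representations \eqref{eq:defoffie1} together with the evenness of $\rho_\epsilon$, and to notice that the third identity in part (i) comes from the very same computation that yields the derivative bounds in part (ii). So the two parts are not really independent.

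First I would establish the symmetry $f_2^\e(x)=f_1^\e(-x)$. Starting from the definition of $f_2^\e$ in \eqref{eq:defoffie}, the substitution $y\mapsto -y$ turns the integral over $(-\infty,0)$ into one over $(0,\infty)$; since $\rho$ is even, so is $\rho_\epsilon$, and the result is exactly the defining integral for $f_1^\e(-x)$. Next, for $G^\e=f_1^\e+f_2^\e$ I would write $G^\e(x)=\int_{\mathbb R}\rho_\epsilon(x-y)\tfrac12 e^{-|y|}\,dy$ and split the domain of integration at $y=0$; the two resulting pieces are precisely the integrals defining $f_1^\e$ and $f_2^\e$ in \eqref{eq:defoffie}.

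The remaining identity $G_x^\e=-f_1^\e+f_2^\e$, together with both derivative bounds in part (ii), I would extract from a single computation: differentiating the changed-variable forms \eqref{eq:defoffie1} by the Leibniz rule. For $f_1^\e$ the boundary term at the upper endpoint $y=x$ contributes $\tfrac12\rho_\epsilon(x)$ while differentiating the integrand $e^{y-x}$ contributes $-f_1^\e(x)$, so $\partial_x f_1^\e=\tfrac12\rho_\epsilon-f_1^\e$; a symmetric computation gives $\partial_x f_2^\e=-\tfrac12\rho_\epsilon+f_2^\e$. Summing these and invoking $G^\e=f_1^\e+f_2^\e$ yields $G_x^\e=-f_1^\e+f_2^\e$, which finishes part (i).

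For part (ii), the $L^\infty$ bounds are immediate: on the domain of each integral the exponential factor is bounded by $1$, while $\rho_\epsilon\ge 0$ and $\int_{\mathbb R}\rho_\epsilon=1$, so $\|f_i^\e\|_{L^\infty}\le\tfrac12$. For the derivative bounds I would feed the identities $\partial_x f_1^\e=\tfrac12\rho_\epsilon-f_1^\e$ and $\partial_x f_2^\e=-\tfrac12\rho_\epsilon+f_2^\e$ through the triangle inequality, using $\|\rho_\epsilon\|_{L^\infty}=\tfrac1\epsilon\|\rho\|_{L^\infty}=C_0/\epsilon$ and the bound $\|f_i^\e\|_{L^\infty}\le\tfrac12$ just obtained, to arrive at $\|\partial_x f_i^\e\|_{L^\infty}\le\tfrac{C_0}{2\epsilon}+\tfrac12$. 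There is no substantive obstacle here; the only place requiring care is the bookkeeping of the Leibniz boundary terms and the consistent use of the evenness of $\rho_\epsilon$, so I would double-check the signs at that step.
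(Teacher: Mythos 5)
Your proposal is correct and follows essentially the same route as the paper: the paper likewise obtains the identities $\partial_x f_1^\e=\tfrac12\rho_\e-f_1^\e$ and $\partial_x f_2^\e=-\tfrac12\rho_\e+f_2^\e$ by differentiating \eqref{eq:defoffie1}, deduces $G_x^\e=-f_1^\e+f_2^\e$ from them, and then reads off both bounds in (ii) from the definitions and these same identities. The only difference is that you spell out the first two equalities of (i), which the paper dismisses as easily proved.
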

\begin{proof}
(i). The first two equalities in \eqref{eq:fiproperties1} can be easily proved. For the third one, taking derivative of \eqref{eq:defoffie1} gives
\begin{align}\label{eq:derivativeoffi}
\partial_xf_1^\e(x)=\frac{1}{2}\rho_\e(x)-f_1^\e(x),~\textrm{ and }~\partial_xf_2^\e(x)=-\frac{1}{2}\rho_\e(x)+f_2^\e(x).
\end{align}
Hence, we have $G_x^{\e}(x)=-f_1^{\e}(x)+f_2^{\e}(x).$

(ii). By Definition \eqref{eq:defoffie}, we can obtain 
$$||f_1^\e||_{L^\infty},||f_2^\e||_{L^\infty}\leq \frac{1}{2}.$$
Due to \eqref{eq:derivativeoffi} and  $C_0=||\rho||_{L^\infty}$, we have
$$||\partial_xf_1^\e||_{L^\infty},||\partial_xf_2^\e||_{L^\infty}\leq \frac{C_0}{2\e}+\frac{1}{2}.$$

\end{proof}

\begin{theorem}\label{thm:nocollisionepsilon}
Let $\{x_i^\e(t)\}_{i=1}^N$ be a solution to \eqref{approximateODE2} subject to $x_i^\e(0)=c_i$, $i=1,\ldots, N$ and $\sum_{i=1}^N|p_i|\leq M_0$ for some constant $M_0$.   If $c_1<c_2<\cdots<c_N$, then $x^\e_1(t)<x^\e_2(t)<\cdots<x_N^\e(t)$ for all $t>0$. 
\end{theorem}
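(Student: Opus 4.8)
The plan is to show that the gap between consecutive trajectories stays positive for all time by deriving a differential inequality for $x_{i+1}^\e(t)-x_i^\e(t)$ of the form $\frac{d}{dt}(x_{i+1}^\e-x_i^\e)\ge -L\,(x_{i+1}^\e-x_i^\e)$, where $L$ is a (possibly $\e$-dependent) constant, and then applying Gr\"onwall's inequality. Since each vector field $U^{N,\e}$ is globally Lipschitz in the particle positions (it is a composition of mollifications of products of the smooth functions $G^\e$), the difference of two velocities is controlled by the difference of positions. The key point is to establish that the velocity difference $U^{N,\e}(x_{i+1}^\e,t)-U^{N,\e}(x_i^\e,t)$ cannot be too negative when the two particles are close; concretely, if I can bound this difference below by $-L\,|x_{i+1}^\e-x_i^\e|$ for the relevant ordering, then $g_i(t):=x_{i+1}^\e(t)-x_i^\e(t)$ satisfies $g_i'(t)\ge -L\,g_i(t)$ as long as $g_i(t)>0$, whence $g_i(t)\ge g_i(0)e^{-Lt}>0$ and collisions never occur.

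First I would decompose $U^{N,\e}=\rho_\e\ast\big[(u^{N,\e})^2-(u_x^{N,\e})^2\big]$ using the splitting $G^\e=f_1^\e+f_2^\e$ and $G_x^\e=-f_1^\e+f_2^\e$ from Lemma~\ref{lmm:noncollision}. The natural strategy is to write $U^{N,\e}$ in terms of $f_1^\e$ and $f_2^\e$ and exploit the factorization $(u^{N,\e})^2-(u_x^{N,\e})^2=(u^{N,\e}-u_x^{N,\e})(u^{N,\e}+u_x^{N,\e})$, which in the split form becomes $\big(2\sum_k p_k f_1^\e(x-x_k)\big)\big(2\sum_k p_k f_2^\e(x-x_k)\big)$. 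This is exactly the point of introducing $f_1^\e,f_2^\e$: the factors are each monotone-type convolutions of a positive kernel against a one-sided exponential, so the Lipschitz bounds in \eqref{eq:fiproperties2} give explicit control on how fast $U^{N,\e}$ can vary in $x$. Using these bounds I would estimate $\big|U^{N,\e}(x_{i+1}^\e,t)-U^{N,\e}(x_i^\e,t)\big|\le L\,|x_{i+1}^\e-x_i^\e|$ with $L$ expressed through $M_0$, $C_0$, and $\e$.

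The main obstacle will be obtaining a genuinely \emph{one-sided} lower bound on the velocity difference rather than just a two-sided Lipschitz estimate, and handling the self-interaction term correctly. A crude Lipschitz bound $|U^{N,\e}(x_{i+1}^\e)-U^{N,\e}(x_i^\e)|\le L g_i$ already suffices for the Gr\"onwall argument, since it yields $g_i'\ge -Lg_i$, so the real content is confirming that the Lipschitz constant $L$ is finite and uniform over the (compact) time interval—this is where the bound $\|\partial_x f_j^\e\|_{L^\infty}\le \frac{C_0}{2\e}+\frac12$ enters, and where the uniform bound $\sum_i|p_i|\le M_0$ keeps the constant controlled independently of the configuration. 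I would then run the Gr\"onwall estimate separately on each gap $g_i$, $i=1,\dots,N-1$: as long as all gaps are positive the vector field is smooth and the inequality $g_i'\ge -Lg_i$ holds, so by continuity no gap can reach zero in finite time. A short contradiction argument (let $t_*$ be the first time some gap vanishes; apply the bound on $[0,t_*]$ to force $g_i(t_*)\ge g_i(0)e^{-Lt_*}>0$) closes the proof and gives strict ordering $x_1^\e(t)<\cdots<x_N^\e(t)$ for all $t>0$.
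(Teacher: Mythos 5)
Your proposal is correct and follows essentially the same route as the paper: the same splitting $G^\e=f_1^\e+f_2^\e$, $G_x^\e=-f_1^\e+f_2^\e$ with the factorization $(u^{N,\e})^2-(u^{N,\e}_x)^2=4\bigl(\sum_k p_kf_1^\e\bigr)\bigl(\sum_k p_kf_2^\e\bigr)$, the Lipschitz bounds of Lemma \ref{lmm:noncollision} giving a constant $C_\e\sim M_0^2\bigl(\frac{C_0}{\e}+1\bigr)$, and a Gr\"onwall estimate on each gap combined with a first-collision-time argument. Your observation that the two-sided Lipschitz bound already suffices for the one-sided Gr\"onwall inequality is exactly how the paper proceeds.
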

\begin{proof}
%In this proof, we omit $t$ variable in $x_i^\e(t)$ for convenience.

If collision between $\{x_i^\e\}_{i=1}^N$ happens, we assume the first collision is between  $x_k^\e$ and $x_{k+1}^\e$ for some $1\leq k\leq N-1$ at time $T_*>0$. Our target is to prove $T_*=+\infty.$

By \eqref{uNepisilon} and \eqref{eq:fiproperties1}, we have
$$u^{N,\e}(x,t)=\sum_{i=1}^Np_iG^\e(x-x_i^\e)=\sum_{i=1}^Np_i\left(f_1^\e(x-x_i^\e)+f_2^\e(x-x_i^\e)\right),$$
and
$$u_x^{N,\e}(x,t)=\sum_{i=1}^Np_iG_x^\e(x-x_i^\e)=\sum_{i=1}^Np_i\left(-f_1^\e(x-x_i^\e)+f_2^\e(x-x_i^\e)\right).$$
Hence, we obtain
\begin{align*}
U_\e^N(x,t)&=(u^{N,\e}+u^{N,\e}_x)(u^{N,\e}-u^{N,\e}_x)=4\left(\sum_{i=1}^Np_if_2^\e(x-x_i^\e)\right)\left(\sum_{i=1}^Np_if_1^\e(x-x_i^\e)\right).
\end{align*}
From \eqref{approximateODE2}, we have
\begin{gather}
\frac{d}{dt}x^\e_k=\big[\rho_{\e}*U_\e^N\big](x^\e_k)~\textrm{ and }~ \frac{d}{dt}x^\e_{k+1}=\big[\rho_{\e}*U_\e^N\big](x^\e_{k+1}).
\end{gather}
For $t< T_*$, taking the difference gives
\begin{align*}
&\frac{d}{dt}(x_{k+1}^\e-x^\e_k)\nonumber\\
=&4\int_{\mathbb{R}}\rho_\e(y)\left(\sum_{i=1}^Np_if_2^\e(x_{k+1}^\e-y-x_i^\e)\right)\left(\sum_{i=1}^Np_if_1^\e(x_{k+1}^\e-y-x_i^\e)\right)dy\\
&-4\int_{\mathbb{R}}\rho_\e(y)\left(\sum_{i=1}^Np_if_2^\e(x_{k}^\e-y-x_i^\e)\right)\left(\sum_{i=1}^Np_if_1^\e(x_{k}^\e-y-x_i^\e)\right)dy\\
=&4\int_{\mathbb{R}}\rho_\e(y)\left(\sum_{i=1}^Np_if_2^\e(x_{k+1}^\e-y-x_i^\e)\right)\sum_{i=1}^Np_i\left(f_1^\e(x_{k+1}^\e-y-x_i^\e)-f_1^\e(x_{k}^\e-y-x_i^\e)\right)dy\\
&+4\int_{\mathbb{R}}\rho_\e(y)\left(\sum_{i=1}^Np_if_1^\e(x_{k}^\e-y-x_i^\e)\right)\sum_{i=1}^Np_i\left(f_2^\e(x_{k+1}^\e-y-x_i^\e)-f_2^\e(x_{k}^\e-y-x_i^\e)\right)dy.
\end{align*}
Combining \eqref{eq:fiproperties1} and \eqref{eq:fiproperties2} gives
\begin{align}
\left|\frac{d}{dt}(x_{k+1}^\e-x^\e_k)\right|\leq&2M_0^2||\partial_xf_1^\e||_{L^\infty}(x_{k+1}^\e-x_k^\e)+2M_0^2||\partial_xf_2^\e||_{L^\infty}(x_{k+1}^\e-x_k^\e)\nonumber\\
\leq &C_\e(x_{k+1}^\e-x_k^\e),~~t< T_*,
\end{align}
where 
$$C_\e=M_0^2\left(\frac{C_0}{\e}+1\right).$$
Hence, for $t<T_*$
\begin{align}
-C_\e(x_{k+1}^\e-x_k^\e)\leq \frac{d}{dt}(x_{k+1}^\e-x^\e_{k})\leq  C_\e(x_{k+1}^\e-x_k^\e),
\end{align}
which implies
$$0<(c_{k+1}-c_k)e^{-C_\e t}\leq x_{k+1}^\e(t)-x_k^\e(t)~\textrm{ for }~t< T_*.$$
By our assumption about $T_*$, we know $T_*=+\infty.$ Hence, we have $x^\e_1(t)<x^\e_2(t)<\cdots<x_N^\e(t)$ for all $t>0$.

\end{proof}

\begin{remark}\label{rmk:nocross}
Let  $u^N(x,t)=\sum_{i=1}^NG(x-x_i(t))$ be a $N$-peakon solution to the mCH equation obtained by Theorem \ref{thm:fixedN}. From Theorem \ref{thm:nocollisionepsilon}, we have
\begin{align}\label{eq:nocross}
x_1(t)\leq x_2(t)\leq \cdots\leq x_N(t).
\end{align}
This result shows that the limit solution allows no cross between peakons. 

\end{remark}

\subsection{Two peakon solutions} \label{sec:twopeakon}
As mentioned in Introduction, the sticky peakon solutions given in \cite{GaoLiu} also satisfy \eqref{eq:nocross}. 
In this subsection, when $N=2$, we show that the limiting $N$-peakon solutions given in Theorem \ref{thm:fixedN} agree with sticky peakon solutions (see $u(x,t)$ in Remark \ref{rmk:nonuniqueness}). Due to Proposition \ref{pro:Npeakon}, the cases with no collisions are easy to verify.

Consider the case with a collision for $N=2$. When $p_1^2>p_2^2$ and $x_1(0)=c_1<c_2=x_2(0)$, the equations for $x_1(t)$ and $x_2(t)$ before collisions are given by
\begin{gather}
\left\{\begin{split}
\frac{d}{dt}x_1(t)=\frac{1}{6}p_1^2+\frac{1}{2}e^{x_1(t)-x_2(t)},\\
\frac{d}{dt}x_2(t)=\frac{1}{6}p_2^2+\frac{1}{2}e^{x_1(t)-x_2(t)}.
\end{split}
\right.
\end{gather}
The two peakons collide at $T_*=\frac{6(c_2-c_1)}{p^2_1-p_2^2}$. Next, we prove the following theorem.
\begin{theorem}\label{thm:twosticky}
Assume $N=2$ and $m_0^N(x)=p_1\delta(x-c_1)+p_2\delta(x-c_2)$ with $p_1^2>p_2^2$ and $c_1<c_2$. Then, the peakon solution $u^N(x,t)=p_1G(x-x_1(t))+p_2G(x-x_2(t))$ obtained in Theorem \ref{thm:fixedN} is a sticky peakon solution, which means
\begin{align}\label{eq:collision}
x_1(t)=x_2(t)~\textrm{ for }~t\geq T_*:=\frac{6(c_2-c_1)}{p_1^2-p_2^2}.
\end{align}
\end{theorem}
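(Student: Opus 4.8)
The plan is to establish the sticky behavior by exploiting the no-collision result for the regularized trajectories (Theorem \ref{thm:nocollisionepsilon}) together with the global Lipschitz convergence from Theorem \ref{thm:fixedN}, and then to pin down the post-collision dynamics via the limiting ODE system \eqref{NpeakontrajectoriesODE}. First I would observe that by Remark \ref{rmk:nocross} the limiting trajectories satisfy $x_1(t)\le x_2(t)$ for all $t$, and by uniqueness before collision (Remark \ref{rmk:rightequation}, via Proposition \ref{pro:Npeakon}) we have $x_1(t)<x_2(t)$ strictly on $[0,T_*)$ with $x_1(T_*)=x_2(T_*)$; the content of the theorem is therefore that once the two trajectories meet at $T_*$, they cannot separate again, i.e.\ $x_1(t)=x_2(t)$ for all $t\ge T_*$.

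The key step is to analyze the difference $d(t):=x_2(t)-x_1(t)\ge 0$ for $t\ge T_*$. I would pass to the limit in the regularized difference estimate: recall that in the proof of Theorem \ref{thm:nocollisionepsilon} the vector field was written through $f_1^\e,f_2^\e$ and yielded a \emph{two-sided} bound on $\frac{d}{dt}(x_{k+1}^\e-x_k^\e)$. The crucial refinement here is that, since $d(t)$ is Lipschitz and nonnegative with $d(T_*)=0$, it suffices to show that wherever $d(t)>0$ the two limiting velocities are \emph{equal}, so that $d$ cannot grow. Concretely, suppose for contradiction that $d(t)>0$ on some interval $(T_*,T_*+\delta)$. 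On such an interval $x_1(t)<x_2(t)$ holds, so both $\mathcal{N}_{i1}$ and $\mathcal{N}_{i2}$ reduce to the single-particle situation (each peakon sees the other as a distinct point), and the limiting ODE \eqref{NpeakontrajectoriesODE} reduces to the two separated equations
\begin{align}\label{eq:twosep}
\frac{d}{dt}x_1(t)=\tfrac16 p_1^2+\tfrac12 p_1p_2\,e^{-(x_2-x_1)},\qquad
\frac{d}{dt}x_2(t)=\tfrac16 p_2^2+\tfrac12 p_1p_2\,e^{-(x_2-x_1)},
\end{align}
for a.e.\ such $t$, after which the relative speed is $\frac{d}{dt}d(t)=\frac16(p_2^2-p_1^2)<0$ since $p_1^2>p_2^2$. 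But a strictly negative derivative of $d$ on an interval where $d$ started at $0$ would force $d(t)<0$, contradicting $d\ge 0$. Hence no such interval exists and $d(t)\equiv 0$ for $t\ge T_*$, which is exactly \eqref{eq:collision}.

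The step I expect to be the main obstacle is justifying that the limiting trajectories genuinely obey \eqref{eq:twosep} (equivalently \eqref{NpeakontrajectoriesODE}) on the post-collision separated region, rather than merely inheriting the soft inequality bounds. The subtlety is that Theorem \ref{thm:fixedN} only gives $C([0,T])$ convergence of $x_i^\e$ and an $L^\infty$ bound on the velocities, not pointwise convergence of $\frac{d}{dt}x_i^\e$; so I would need a separate argument identifying the limiting velocity. The cleanest route is to invoke Proposition \ref{discontinue}: on any time interval where $x_1(t)<x_2(t)$ stays bounded away from each other, the regularized vector field $U^{N,\e}(x_i^\e(t),t)$ converges (uniformly on compact subintervals) to the right-hand side of \eqref{NpeakontrajectoriesODE} evaluated at the limiting configuration, which gives \eqref{eq:twosep}. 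This identification—combined with the elementary sign argument on $d(t)$—closes the proof; the remaining verifications (that the limit solution before $T_*$ matches the unique strictly-ordered solution, and that the merged configuration with amplitude $p_1+p_2$ in \eqref{eq:twopeakonsticky} is consistent) follow directly from Proposition \ref{pro:Npeakon} and Remark \ref{rmk:nonuniqueness}.
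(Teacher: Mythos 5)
Your proposal is correct in its overall architecture, but it takes a genuinely different route from the paper. The paper's proof never uses the weak-solution property of the limit in the post-collision regime: it works entirely at the level of the regularized trajectories, deriving the explicit formula \eqref{eq:difspeed} for $\frac{d}{dt}S_\e$ with $S_\e=x_2^\e-x_1^\e$, proving that the velocity integral converges to $\frac16(p_2^2-p_1^2)$ \emph{uniformly} over separations $s\ge\delta$ (Proposition \ref{pro:speed}), and then running a barrier argument (Claims 1 and 2 in the paper) showing that, uniformly for $\e<\e_0$, $S_\e(t)\le\delta$ for all $t\ge t_\delta$, whence $S_\e(t)\to 0$ for $t\ge T_*$. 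You instead argue on the limit itself: $d(t)=x_2(t)-x_1(t)\ge 0$ by Remark \ref{rmk:nocross}, the limit is a weak solution by Theorem \ref{thm:fixedN}, and on any interval of strict separation the separated two-peakon system forces $d'=\frac16(p_2^2-p_1^2)<0$, contradicting $d\ge 0$. This is legitimate and arguably shorter given the paper's other results --- indeed the paper itself uses exactly this kind of localization of Proposition \ref{pro:Npeakon} in its three-peakon discussion following \eqref{eq:relative}. What your route buys is that Proposition \ref{pro:speed} is not needed at all; what it loses is the quantitative, uniform-in-$\e$ control of the regularized trajectories that the paper's argument provides.

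Two details need repair. First, the contradiction setup: the negation of \eqref{eq:collision} only produces some $t_1>T_*$ with $d(t_1)>0$, not an interval of the form $(T_*,T_*+\delta)$ on which $d>0$; you should set $t_2:=\sup\{t\le t_1: d(t)=0\}$, note $d(t_2)=0$ and $d>0$ on $(t_2,t_1]$, and run the sign argument there to get $d(t_1)=\int_{t_2}^{t_1}d'(s)\,ds<0$, a contradiction. Second, your preferred identification tool, Proposition \ref{discontinue}, is stated for a \emph{fixed} configuration $\{x_i\}$, whereas you need the limit of $U^{N,\e}(x_i^\e(t),t)$ along \emph{moving} configurations $x_i^\e(t)\to x_i(t)$; upgrading that proposition to convergence uniform over $\delta$-separated configurations requires reworking its proof and is not available off the shelf. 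The clean and fully rigorous identification --- which you mention only as a secondary verification --- is via Proposition \ref{pro:Npeakon}: since $u^N$ is a global weak solution, testing with functions supported in $\mathbb{R}\times(t_2,t_1)$ and localized near each trajectory yields \eqref{eq:twobefore} a.e.\ on the separation interval. With those two substitutions your argument closes.
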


To prove Theorem \ref{thm:twosticky}, we first consider \eqref{approximateODE2} for $N=2$. Denote $S_\e(t):=x^\e_2(t)-x^\e_1(t)>0$. By the fact that $f^\e_1(-x)=f^\e_2(x)$, we find that 
\begin{align}\label{eq:derivativex1}
\frac{d}{dt}x^\e_1&=4\int_{-\infty}^{\infty}\rho_{\e}(y)\big[p_1f_2^{\e}(-y)+p_2f_2(-S_\e-y)\big]\big[p_1f_1^{\e}(-y)+p_2f_1^{\e}(-S_\e-y)\big] dy\nonumber\\
&=4\int_{-\infty}^{\infty}\rho_{\e}(y)\big[p_1f_1^{\e}(y)+p_2f^\e_1(S_\e+y)\big]\big[p_1f_2^{\e}(y)+p_2f_2^{\e}(S_\e+y)\big] dy.
\end{align}
By changing of variables $y\to -y$ and using the fact that $\rho_{\e}$ is even, we obtain 
\begin{align}\label{eq:derivativex2}
\frac{d}{dt}x^\e_2&=4\int_{-\infty}^{\infty}\rho_{\e}(y)\big[p_1f_2^{\e}(S_\e-y)+p_2f_2(-y)\big]\big[p_1f_1^{\e}(S_\e-y)+p_2f_1^{\e}(-y)\big] dy\nonumber\\
&=4\int_{-\infty}^{\infty}\rho_{\e}(y)\big[p_1f_2^{\e}(S_\e+y)+p_2f_2^\e(y)\big]\big[p_1f_1^{\e}(S_\e+y)+p_2f_1^{\e}(y)\big] dy
\end{align}
Taking the difference of \eqref{eq:derivativex1} and \eqref{eq:derivativex2} gives
\begin{gather}\label{eq:difspeed}
\frac{d}{dt}S_\e=4(p_2^2-p_1^2)\int_{-\infty}^{\infty}\rho_{\e}(y) \big[f_1^{\e}(y)f_2^{\e}(y)-f_1^{\e}(S_\e+y)f_2^{\e}(S_\e+y)\big]dy.
\end{gather}
We have the following useful proposition.
\begin{proposition}\label{pro:speed}
For any $s>0$, we have
\begin{align}\label{eq:limitspeed}
\lim_{\e\to0}4\int_{-\infty}^{\infty}\rho_{\e}(x) \big[f_1^{\e}(x)f_2^{\e}(x)-f_1^{\e}(s+x)f_2^{\e}(s+x)\big]dx= \frac{1}{6}.
\end{align}
The above convergence is uniform about $s\in [\delta,+\infty)$ for any $\delta>0$.
\end{proposition}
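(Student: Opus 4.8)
The plan is to collapse everything onto the single--peakon limit already computed in Lemma \ref{convergencetoNpeakon}, exploiting the algebraic relation between $f_1^\e f_2^\e$ and $G^\e, G_x^\e$. From Lemma \ref{lmm:noncollision}(i) we have $G^\e=f_1^\e+f_2^\e$ and $G_x^\e=f_2^\e-f_1^\e$, so that
\[
4f_1^\e(x)f_2^\e(x)=(G^\e)^2(x)-(G_x^\e)^2(x).
\]
Substituting this into the integrand of \eqref{eq:limitspeed} and using that $\rho_\e$ is even (whence $\int_{\mathbb{R}}\rho_\e(x)g(x)\,dx=(\rho_\e\ast g)(0)$ and $\int_{\mathbb{R}}\rho_\e(x)g(s+x)\,dx=(\rho_\e\ast g)(s)$), the quantity in \eqref{eq:limitspeed} rewrites as
\[
\Big[(\rho_\e\ast (G^\e)^2)(0)-(\rho_\e\ast (G_x^\e)^2)(0)\Big]-\Big[(\rho_\e\ast (G^\e)^2)(s)-(\rho_\e\ast (G_x^\e)^2)(s)\Big].
\]
It then suffices to control the two bracketed pieces separately.

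First I would treat the $s$--independent piece at $0$. By the uniform convergence $G^\e\to G$ and the continuity of $G^2$ (the same continuity argument as for \eqref{firstterm}) one gets $(\rho_\e\ast (G^\e)^2)(0)\to G^2(0)=\tfrac14$, while Lemma \ref{convergencetoNpeakon} gives $(\rho_\e\ast (G_x^\e)^2)(0)\to\tfrac1{12}$. Hence this piece converges to $\tfrac14-\tfrac1{12}=\tfrac16$, which is precisely the asserted limit and carries no dependence on $s$.

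The main work, and the only genuine obstacle, is to show that the shifted piece $4\int_{\mathbb{R}}\rho_\e(x)f_1^\e(s+x)f_2^\e(s+x)\,dx$ tends to $0$ \emph{uniformly} for $s\in[\delta,+\infty)$. This is consistent pointwise, since for $s\neq0$ we have $G(s)=\tfrac12 e^{-|s|}$ and $G_x(s)=-\tfrac12\,\mathrm{sgn}(s)e^{-|s|}$, so $G^2(s)-G_x^2(s)=0$; but the uniform statement requires a quantitative tail bound rather than mere pointwise convergence. Here I would use $\|f_1^\e\|_{L^\infty}\le\tfrac12$ from \eqref{eq:fiproperties2} together with the estimate $f_2^\e(z)=\tfrac12\int_{z}^{\infty}\rho_\e(y)e^{z-y}\,dy\le\tfrac12\int_{z/\e}^{\infty}\rho(w)\,dw$ for $z>0$, which is super--polynomially small once $z\gtrsim\delta$ because $\rho\in\mathcal{S}(\mathbb{R})$. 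Splitting the $x$--integral at $x=-s/2$: on $\{x\ge-s/2\}$ one has $s+x\ge s/2\ge\delta/2$, so $f_2^\e(s+x)$ is uniformly tiny and the contribution is controlled by $\int_{\delta/(2\e)}^{\infty}\rho(w)\,dw$; on $\{x<-s/2\}$ the product $4f_1^\e f_2^\e\le1$ is bounded while $\int_{\{x<-s/2\}}\rho_\e(x)\,dx\le\int_{\{w<-\delta/(2\e)\}}\rho(w)\,dw$. Both bounds depend only on $\delta$ and vanish as $\e\to0$, giving uniform convergence on $[\delta,+\infty)$.

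Combining the two pieces yields \eqref{eq:limitspeed}, uniformly in $s\ge\delta$. I expect the algebraic identity and the limit at $0$ to be routine given Lemma \ref{convergencetoNpeakon}; the care is entirely in the Schwartz--tail splitting that upgrades pointwise vanishing of the shifted term to uniformity on $[\delta,+\infty)$.
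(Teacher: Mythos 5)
Your proof is correct, and it takes a genuinely different route for the main limit, so let me compare. The paper works directly with $I_1^\e:=4\int_{\mathbb{R}}\rho_\e(x) f_1^\e(x)f_2^\e(x)\,dx$: after a change of variables it introduces $F(x)=\int_{-\infty}^x\rho(y)\,dy$ and evaluates $\lim_{\e\to0}I_1^\e=\int_{\mathbb{R}} F'(x)F(x)(1-F(x))\,dx=\tfrac16$ by dominated convergence --- a self-contained computation in the same style as, but independent of, Lemma \ref{convergencetoNpeakon}. You instead factor $4f_1^\e f_2^\e=(G^\e)^2-(G_x^\e)^2$ (the same identity the paper exploits later in Theorem \ref{thm:nocollisionepsilon}) and reduce the unshifted piece to previously established facts: continuity of $G^2$ gives $(\rho_\e\ast(G^\e)^2)(0)\to\tfrac14$, Lemma \ref{convergencetoNpeakon} gives $(\rho_\e\ast(G_x^\e)^2)(0)\to\tfrac1{12}$, and $\tfrac14-\tfrac1{12}=\tfrac16$. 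Your route buys economy (no new integral identity to verify) and makes conceptually visible that the asymptotic relative speed in Proposition \ref{pro:speed} is exactly the one-peakon speed $G^2(0)-\tfrac1{12}$ behind \eqref{correctdefinition}; the paper's route buys self-containedness. For the shifted term the two arguments are the same in spirit --- Schwartz-tail estimates uniform in $s\ge\delta$: the paper bounds the exponential factors by $1$ after rescaling and gets $0<I_2^\e\le\int_{\mathbb{R}}\rho(x)\int_{x+\delta/\e}^{\infty}\rho(y)\,dy\,dx$, while you split the $x$-integral at $-s/2$ and use the bounds $\|f_1^\e\|_{L^\infty}\le\tfrac12$ and $f_2^\e(z)\le\tfrac12\int_{z/\e}^{\infty}\rho(w)\,dw$ for $z>0$; both bounds depend only on $\delta/\e$, not on $s$, and vanish as $\e\to0$, so the uniformity claim is established correctly in both treatments.
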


\begin{proof}
Let 
$$4\int_{-\infty}^{\infty}\rho_{\e}(x) \big[f_1^{\e}(x)f_2^{\e}(x)-f_1^{\e}(s+x)f_2^{\e}(s+x)\big]dx=:I_1^\e-I_2^\e,$$
where 
$$I_1^\e:=4\int_{-\infty}^{\infty}\rho_{\e}(x) f_1^{\e}(x)f_2^{\e}(x)dx~\textrm{ and }~I_2^\e:=4\int_{-\infty}^{\infty}\rho_{\e}(x) f_1^{\e}(s+x)f_2^{\e}(s+x)dx.$$
For $I_1^\e$, by changing of variables, we have
\begin{align*}
I_1^\epsilon=\int_{-\infty}^\infty\rho(x)\bigg(\int_{-\infty}^x\rho(y)e^{\e(y-x)}dy\bigg)\bigg(\int^{\infty}_x\rho(y)e^{\e(x-y)}dy\bigg)dx.
\end{align*}
Set 
$$F(x):=\int_{-\infty}^x\rho(y)dy.$$
By Lebesgue Dominated convergence Theorem, we have
\begin{align}\label{eq:I1e}
\lim_{\e\to0}I_1^\epsilon&=\int_{-\infty}^\infty\rho(x)\bigg(\int_{-\infty}^x\rho(y)dy\bigg)\bigg(\int^{\infty}_x\rho(y)dy\bigg)dx\nonumber\\
&=\int_{-\infty}^\infty F'(x)F(x)(1-F(x))dx=\frac{1}{6}.
\end{align}
Similarly, for $I_2^\e$ we have
\begin{align*}
I_2^\epsilon=\int_{-\infty}^\infty\rho(x)\bigg(\int_{-\infty}^{x+\frac{s}{\e}}\rho(y)e^{\e(y-x)-s}dy\bigg)\bigg(\int^{\infty}_{x+\frac{s}{\e}}\rho(y)e^{\e(x-y)+s}dy\bigg)dx.
\end{align*}
When $\delta>0$ and $s\in[\delta,+\infty)$, we have $\displaystyle{\frac{\delta}{\e}\leq \frac{s}{\e}}$. Hence,
\begin{align*}
0<I_2^\epsilon&\leq \int_{-\infty}^\infty\rho(x)\bigg(\int_{-\infty}^{\infty}\rho(y)dy\bigg)\bigg(\int^{\infty}_{x+\frac{s}{\e}}\rho(y)dy\bigg)dx\\
&\leq \int_{-\infty}^\infty\rho(x)\bigg(\int^{\infty}_{x+\frac{\delta}{\e}}\rho(y)dy\bigg)dx.
\end{align*}
Therefore, the following convergence holds uniformly for $s\in[\delta,+\infty)$:
\begin{align}\label{eq:I2e}
\lim_{\e\to0}I_2^\e=0.
\end{align}
Combining \eqref{eq:I1e} and \eqref{eq:I2e} gives \eqref{eq:limitspeed}.
\end{proof}
\begin{proof}[Proof of Theorem \ref{thm:twosticky}]
Let $m_0^N(x)=p_1\delta(x-c_1)+p_2\delta(x-c_2)$ for constants $p_i$ and $c_i$ satisfies
\begin{align}\label{eq:initialassumption}
c_1<c_2~\textrm{ and }~p_1^2>p_2^2.
\end{align}
$x^\e_1(t)$ and $x^\e_2(t)$ are obtained by \eqref{approximateODE2}. From Theorem  \ref{lmm:noncollision}, we have
$x^\e_1(t)<x_2^\e(t)$ for any $t\ge0$. By Theorem \ref{thm:fixedN}, for any $T>0$, there are $x_1(t),x_2(t)\in C([0,T])$ such that
$$x_1^\e(t)\to x_1(t)~\textrm{ and }~x_2^\e(t)\to x_2(t)~\textrm{ in }~C([0,T]),~~\e\to 0.$$ 
Hence, we have
$$x_1(t)\leq x_2(t).$$
By Proposition \ref{pro:Npeakon}, we know that solution given by Theorem \ref{thm:fixedN} is the same as the sticky peakon solution when $t<T_*$.

By \eqref{eq:difspeed} and Proposition \ref{pro:speed}, we can see that for any $0<\delta<\min\big\{c_2-c_1,-\frac{1}{6}(p_2^2-p_1^2)\big\}$, there is a $\e_0>0$ such that when $S_\e(t)\ge\delta$ we have
$$\frac{1}{6}(p_2^2-p_1^2)-\delta<\frac{d}{dt}S_\e(t)<\frac{1}{6}(p_2^2-p_1^2)+\delta<0~\textrm{ for any }~\e<\e_0.$$

\textbf{Claim 1:} If there exists $t_0>0$ such that $S_\e(t_0)\leq \delta$, then $S_\e(t)\leq \delta$ for $t>t_0$. Indeed, if there is $t_1>t_0$ and $S_\e(t_1)>\delta$, we set
$$t_2:=\inf\{t<t_1:S_\e(s)>\delta\textrm{ for }s\in(t,t_1)\}.$$
Hence, $t_2\ge t_0$ and $S_\e(t_2)=\delta$. Moreover, $S_\e(t)>\delta$ for $t\in(t_2,t_1)$. Therefore,
\begin{align*}
S_\e(t_1)= \int_{t_2}^{t_1}\frac{d}{ds}S_\e(s)ds +S_\e(t_2)\leq \Big[\frac{1}{6}(p_2^2-p_1^2)+\delta\Big](t_1-t_2)+\delta\leq \delta,
\end{align*}
which is a contradiction with $S_\e(t_1)>\delta$.

\textbf{Claim 2:} We have $S_\e(t)\leq\delta$ for $t\geq \frac{6(c_2-c_1-\delta)}{p_1^2-p_2^2-6\delta}=:t_\delta$. If not, from Claim 1 we have $S_\e(t)>\delta$ for $t\leq t_\delta$. Hence,
\begin{align*}
S_\e(t_\delta)&= \int_0^{t_\delta}\frac{d}{ds}S_\e(s)ds +c_2-c_1\leq \Big[\frac{1}{6}(p_2^2-p_1^2)+\delta\Big]t_\delta+c_2-c_1\leq \delta,
\end{align*}
which is a contradiction.

With the above claims, we can obtain
\begin{align}
\lim_{\e\to0}S_\e(t)=0~\textrm{ for }~t\geq\frac{6(c_2-c_1)}{p_1^2-p_2^2},
\end{align}
which implies \eqref{eq:collision}

\end{proof}
\begin{remark}
Though the peakons are not physical particles and they are not governed by Newton's laws. We have the analogy of the conservation of momentum during the collision. Let $p$ be the `mass' of the peakon. The speeds of the two peakons before collision are $\frac{1}{6}p_1^2+\frac{1}{2}p_1p_2$ and $\frac{1}{6}p_2^2+\frac{1}{2}p_1p_2$ respectively. The speed after collision is $\frac{1}{6}(p_1+p_2)^2$. We can check formally that
\[
(p_1+p_2)\frac{1}{6}(p_1+p_2)^2=p_1\left(\frac{1}{6}p_1^2+\frac{1}{2}p_1p_2\right)+p_2\left(\frac{1}{6}p_2^2+\frac{1}{2}p_1p_2\right).
\]
We can then introduce the instantaneous (infinite) ``force'' as
\[
F_1=p_1[\dot{x}_1] \delta(t-T_*)=\frac{1}{6}p_1p_2(p_2-p_1)\delta(t-T_*)
\]
where $[\dot{x}_1]$ represents the jump of $\dot{x}$ at $t=T_*$. Similarly,
\[
F_2=p_2[\dot{x}_2]\delta(t-T_*)=\frac{1}{6}p_2p_1(p_1-p_2)\delta(t-T_*)
\]
Here $F_1+F_2=0$, which is equivalent to the ``local conservation
of momentum''. 
\end{remark}

\subsection{Discussion about three particle system}

When $N\ge 3$, the limiting $N$-peakon solutions obtained by Theorem \ref{thm:fixedN} can be complicated. 
In this subsection, we study three peakon trajectory interactions.

Denote the  initial data $x_1(0)<x_2(0)<x_3(0)$ and constant amplitudes of peakons $p_i>0$, $i=1,2,3$. Let $x^\e_i(t)$, $i=1,2,3$, be solutions to the regularized system \eqref{approximateODE2} and $x_i(t)$, $i=1,2,3$, be the limiting trajectories given by Theorem \ref{thm:fixedN}. Let $x^s_i(t)$, $i=1,2,3$, be trajectories to sticky peakon solutions given in \cite{GaoLiu}. Before the first collision time, by Proposition \ref{pro:Npeakon} we know that $x_i(t)=x_i^s(t)$, $i=1,2,3$, which is the solution to \eqref{eq:Npeakonmonoticity0}. However, after collisions, the limiting trajectories $x_i(t)$ may or may not coincide with the sticky trajectories $x_i^s(t)$. 
Below, we consider two typical cases.

\textbf{Sticky case (i).} We illustrate this case by an example with $p_1=4,~p_2=2,~p_3=1$ and $x_1(0)=-7,~x_2(0)=-5,~x_3(0)=-3$ (see Figure \ref{fig:threetrajectories1}). For the sticky trajectories (red dashed lines in Figure \ref{fig:threetrajectories1}) $x_i^s(t)$, $i=1,2,3$, the first collision happens between $x_2^s(t)$ and $x_3^s(t)$ at time $t_1^*$. Then $x_2^s(t)$ and $x_3^s(t)$ sticky together traveling with new amplitude $p_2+p_3$ for $t\in (t_1^*,t_2^*)$. Because $p_1>p_2+p_3$, $x_1^s(t)$ catches up with $x_2^s(t)$ and $x_3^s(t)$ at $t_2^*$. At last, the three peakons all sticky together after $t_2^*$.  

When $\e>0$ is small, the behavior of trajectories $x_i^\e(t)$, $i=1,2,3$, given by the regularized system \eqref{approximateODE2} is very similar to the sticky trajectories (see blue solid lines in Figure \ref{fig:threetrajectories1}). This indicates that $x_i(t)\equiv x_i^s(t)$ for any $t>0$ and the limiting peakon solution given by Theorem \ref{thm:fixedN} agrees with the sticky peakon solution.
\begin{figure}[H]
\begin{center}
\includegraphics[width=0.6\textwidth]{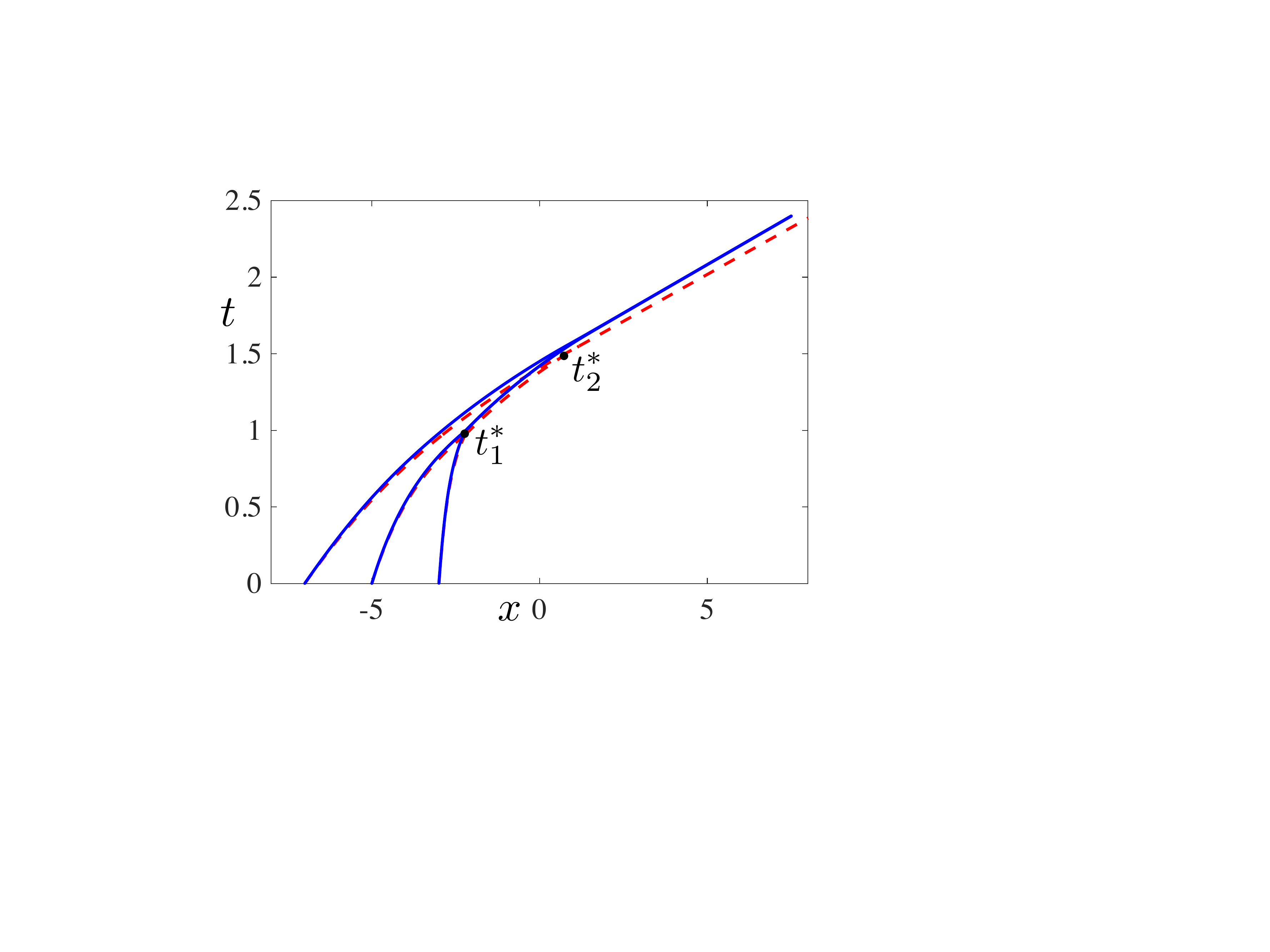}
\end{center}
\caption{$p_1=4,~p_2=2,~p_3=1$ and $x_1(0)=-7,~x_2(0)=-5,~x_3(0)=-3$; $\e=0.02$. 
The blue lines are trajectories of three peakons $\{x^{\e}_i(t)\}_{i=1}^3$ given by dispersive regularization system \eqref{approximateODE2}. The red dashed lines are trajectories of sticky three peakons.}
\label{fig:threetrajectories1}
\end{figure}

\textbf{Sticky and separation case (ii).} We illustrate this case by an example with $p_1=4,~p_2=2,~p_3=3$ and $x_1(0)=-7,~x_2(0)=-6,~x_3(0)=-2$ (see Figure \ref{fig:threetrajectories2}). For the sticky trajectories (red dashed lines in Figure \ref{fig:threetrajectories2}) $x_i^s(t)$, $i=1,2,3$, the first collision happens between $x_1^s(t)$ and $x_2^s(t)$ at time $\hat{t}_1$. Then $x_1^s(t)$ and $x_2^s(t)$ sticky together traveling with new amplitude $p_1+p_2$ for $t\in (\hat{t}_1,\hat{t}_2)$. Because $p_1+p_2>p_3$, $x_1^s(t)$ and $x_2^s(t)$ catch up with $x_3^s(t)$ at $\hat{t}_2$. At last, the three peakons all sticky together after $\hat{t}_2$.  

When $\e>0$ is small, the behavior of trajectories $x_i^\e(t)$, $i=1,2,3$, given by the regularized system \eqref{approximateODE2} is very similar with the sticky trajectories $x_i^s(t)$ before $T_1$, where $x_1^\e(t)$ get close to $x_2^\e(t)$. However, when $x_3^\e(t)$ comes close to $x^\e_2(t)$, $x^\e_2(t)$ separates from  $x_1^\e(t)$ around $T_1$ and gradually moves to $x_3^{\e}(t)$ and then holds together with $x_3^{\e}(t)$. Since $p_2+p_3>p_1$, $x_2^{\e}(t)$ and $x_3^{\e}(t)$ get far away from $x_1^{\e}(t)$.

This indicates the limiting trajectories $x_i(t)\neq x_i^s(t)$ for $t\gtrsim T_1$ and the limiting peakon solution given by Theorem \ref{thm:fixedN} does not agree with the sticky peakon solution. Below, we give some discussions about this interesting phenomenon.

\begin{figure}[H]
\begin{center}
\includegraphics[width=0.6\textwidth]{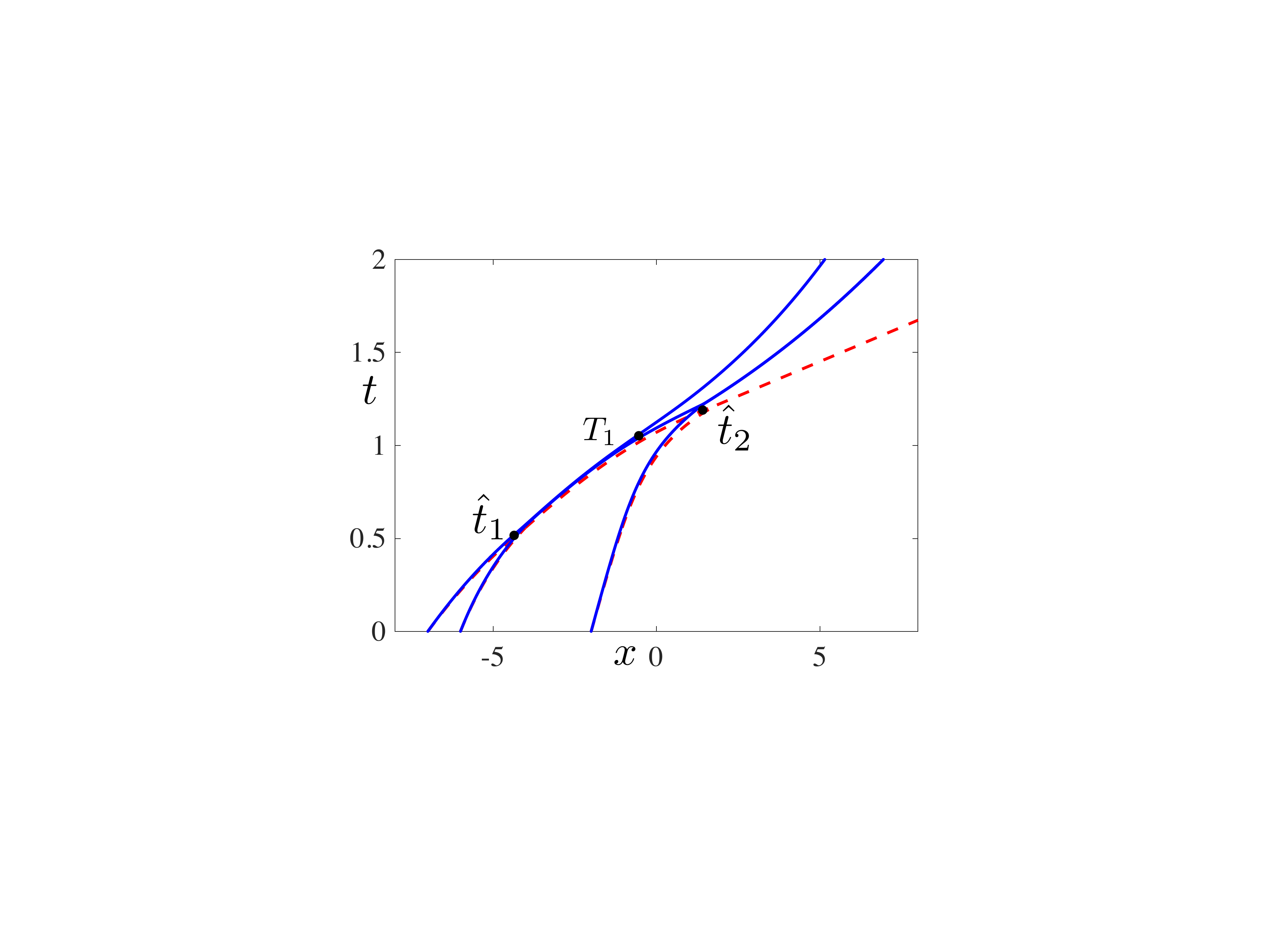}
\end{center}
\caption{$p_1=4,~p_2=2,~p_3=3$ and $x_1(0)=-7,~x_2(0)=-6,~x_3(0)=-2$; $\e=0.02$. The blue lines are trajectories for three peakons $\{x^{\e}_i(t)\}_{i=1}^3$ obtained by dispersive regularization system \eqref{approximateODE2}. The red dashed lines are trajectories of sticky three peakons.}
\label{fig:threetrajectories2}
\end{figure}

Next, we discuss in detail the limiting solution in cases like Figure \ref{fig:threetrajectories2}, i.e. $p_1>p_2>0$, $p_1+p_2>p_3>0$ , $p_1<p_2+p_3$ and $x_3(0)-x_2(0) \gg x_2(0)-x_1(0)>0$. Consider the limiting solution of the form:
\[
u(x, t)=\sum_{i=1}^3 p_iG(x-x_i(t)),
\]
where $x_i(t)$ are Lipschitz continuous and $x_1(t)\le x_2(t)\le x_3(t)$. Since $x_1(0)<x_2(0)<x_3(0)$, by Proposition \ref{pro:Npeakon}, $x_i(t): i=1,2,3$ satisfy the following system for $t\in (0, T_*)$ where $T_*>0$ is the first collision time:
\begin{gather}\label{eq:threepeakon}
\left\{
\begin{split}
&\frac{dx_1}{dt}=\frac{1}{6}p_1^2+\frac{1}{2}p_1p_2e^{-(x_2-x_1)}+\frac{1}{2}p_1p_3e^{-(x_3-x_1)},\\
&\frac{dx_2}{dt}=\frac{1}{6}p_2^2+\frac{1}{2}p_1p_2e^{-(x_2-x_1)}+\frac{1}{2}p_2p_3e^{-(x_3-x_2)}+p_1p_3e^{-(x_3-x_1)},\\
&\frac{dx_3}{dt}=\frac{1}{6}p_3^2+\frac{1}{2}p_1p_3e^{-(x_3-x_1)}+\frac{1}{2}p_2p_3e^{-(x_3-x_2)}.
\end{split}\right. 
\end{gather}
Let $S_i:=x_{i+1}-x_i\ge 0$, $i=1,2$. From \eqref{eq:threepeakon}, the distances $S_i$ satisfy the following equations for $t<T_*$:
\begin{gather}\label{eq:relative}
\left\{
\begin{split}
\frac{dS_1}{dt}=\frac{1}{6}(p_2^2-p_1^2)+\frac{1}{2}p_2p_3e^{-S_2}+\frac{1}{2}p_1p_3 e^{-(S_1+S_2)},\\
\frac{dS_2}{dt}=\frac{1}{6}(p_3^2-p_2^2)-\frac{1}{2}p_1p_2e^{-S_1}-\frac{1}{2}p_1p_3e^{-(S_1+S_2)}.
\end{split}
\right.
\end{gather}
For the case in Figure \ref{fig:threetrajectories2} to happen, $S_2(0)$ should be large enough so that $S_1(T_*)=0$ and 
\[
\lim_{t\to T_*^-}\frac{dS_1}{dt}=\frac{1}{6}(p_2^2-p_1^2)+\frac{1}{2}p_2p_3e^{-S_2(T_*)}+\frac{1}{2}p_1p_3e^{-S_2(T_*)}<0.
\]
In other words, $S_2(T_*)>S_2^*>0$, where $S_2^*$ is defined by:
\[
\frac{1}{6}(p_2^2-p_1^2)+\frac{1}{2}p_2p_3e^{-S_2^*}+\frac{1}{2}p_1p_3e^{-S_2^*}=0.
\]
Since $S_1(t)\ge 0$, while 
\[
\frac{1}{6}(p_2^2-p_1^2)+\frac{1}{2}p_2p_3e^{-S_2}+\frac{1}{2}p_1p_3 e^{-(S_1+S_2)}<0,
\]
\eqref{eq:relative} must not be valid for $t\in (T_*, T_*+\delta)$ for some $\delta>0$ and neither does \eqref{eq:threepeakon}.  Indeed, the new system of equations must be 
\eqref{eq:Npeakonmonoticity0} for $N=2$:
\begin{gather}\label{eq:mergedtwo}
\left\{
\begin{split}
&\frac{d}{dt}x_i(t)=\frac{1}{6}(p_1+p_2)^2+\frac{1}{2}(p_1+p_2)p_3e^{x_i(t)-x_3(t)},~i=1,2,\\
&\frac{d}{dt}x_3(t)=\frac{1}{6}p_3^2+\frac{1}{2}(p_1+p_2)p_3e^{x_2(t)-x_3(t)}.
\end{split}
\right.
\end{gather}
Hence, $S_1(t)=0$ for $t\in (T_*, T_*+\delta)$ while $S_2(t)$ keeps decreasing because $p_1+p_2>p_3$.

Note that the sticky solutions $x_i^s(t)$ satisfy \eqref{eq:mergedtwo} until $x_1^s(t)=x_2^s(t)=x_3^s(t)$. On the contrary, the simulations indicate that $x_1(t)$ and $x_2(t)$ can split when $x_2(t)<x_3(t)$ and then $\{x_i(t)\}_{i=1}^3$ do not satisfy \eqref{eq:mergedtwo} after the splitting. Define the splitting time $T_1$ as 
\[
T_1=\inf\{t\ge T_*: S_1(t)>0\}.
\]

We claim that $T_1\ge T_2:=\inf\{t>0: S_2(t)=S_2^*\}>T_*$.
Suppose for otherwise $T_1<T_2$, then there exists $\delta>0$ such that $S_1(t)>0$  for $t\in (T_1, T_1+\delta)$ with some small $\delta$, $S_1(T_1)=0$ and $S:=\inf_{t\in (T_1, T_1+\delta)}S_2(t)>S_2^*$. For $t\in(T_1, T_1+\delta)$, $S_1$ and $S_2$ must satisfy \eqref{eq:relative} by Proposition \ref{pro:Npeakon}. Consequently, 
\[
\frac{d}{dt}S_1(t)\le \frac{1}{6}(p_2^2-p_1^2)+\frac{1}{2}p_2p_3e^{-S}+\frac{1}{2}p_1p_3e^{-S}<0,~t\in(T_1, T_1+\delta).
\]
Since $S_1(T_1)=0$, we must have $S_1(t)\leq 0$ for $t\in(T_1, T_1+\delta)$. This is a contradiction. 

Now that \eqref{eq:mergedtwo} holds on $(T_*, T_1)$ while $T_1\ge T_2$, we find
\[
T_2=T_*+6(S_2(T_*)-S_2^*)/((p_1+p_2)^2-p_3^2)>T_*.
\]
The question is that when the split happens (i.e. how large can $T_1$ be).
\begin{conjecture*}
At the point of splitting ($t=T_1$), both $x_1(t)$ and $x_2(t)$ are right-differentiable, and $x_1(t): t\ge T_1$ and $x_2(t): t\ge T_1$ are tangent at $t=T_1$. 
\end{conjecture*}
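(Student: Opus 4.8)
The plan is to reduce the conjecture to the single identity $T_1=T_2$ and then to read off both assertions from the post-collision dynamics. I work in the splitting regime of Figure \ref{fig:threetrajectories2}, so $S_1>0$ on some $(T_1,T_1+\delta)$. By Theorem \ref{thm:fixedN} the limit $u=\sum_i p_iG(x-x_i)$ is a weak solution, and on $(T_1,T_1+\delta)$ one has the strict ordering $x_1(t)<x_2(t)<x_3(t)$; hence Proposition \ref{pro:Npeakon} forces $(x_1,x_2,x_3)$ to solve the smooth system \eqref{eq:threepeakon} there. Consequently $x_1,x_2$ are $C^1$ on $(T_1,T_1+\delta)$ and their right derivatives at $T_1$ exist, which already gives the first assertion. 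Writing $g(S_1,S_2)$ for the right-hand side of the first equation in \eqref{eq:relative}, letting $t\to T_1^+$ and using $S_1(T_1)=0$ gives
\[
\dot x_2(T_1^+)-\dot x_1(T_1^+)=\tfrac{d}{dt}S_1(T_1^+)=g(0,S_2(T_1))=\tfrac16(p_2^2-p_1^2)+\tfrac12(p_1+p_2)p_3e^{-S_2(T_1)} .
\]
Thus tangency is \emph{equivalent} to $S_2(T_1)=S_2^*$; since $S_2$ is strictly decreasing on the merged phase \eqref{eq:mergedtwo} and $T_1\ge T_2$ is already proven, this is in turn equivalent to $T_1=T_2$. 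It remains to prove $T_1\le T_2$, i.e. that the two peakons cannot stay merged once $S_2$ has dropped below $S_2^*$.

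I would establish $T_1\le T_2$ through a boundary-layer analysis of the regularized gap $S_1^\e:=x_2^\e-x_1^\e>0$. With $A^\e=\sum_ip_if_2^\e(\cdot-x_i^\e)$ and $B^\e=\sum_ip_if_1^\e(\cdot-x_i^\e)$, so that $U_\e^N=4A^\e B^\e$ as in Section \ref{subsec:nocollision}, one has
\[
\tfrac{d}{dt}S_1^\e=4\big[(\rho_\e\ast A^\e B^\e)(x_2^\e)-(\rho_\e\ast A^\e B^\e)(x_1^\e)\big].
\]
Splitting $A^\e B^\e$ into the self-interaction of peakons $1,2$ and the terms involving peakon $3$, and introducing the inner variable $S_1^\e=\e\sigma$ with peakon $3$ frozen at distance $S_2$, I expect the leading inner dynamics to be
\[
\tfrac{d\sigma}{d\tau}=V(\sigma;S_2):=(p_2^2-p_1^2)\,\Phi(\sigma)+\tfrac12(p_1+p_2)p_3e^{-S_2}\,\Psi(\sigma),\qquad \tau=t/\e,
\]
where, with $F(y)=\int_{-\infty}^{y}\rho$, the profiles are
\[
\Phi(\sigma)=\int_{\mathbb R}\rho(\xi)\big[F(\xi)(1-F(\xi))-F(\xi+\sigma)(1-F(\xi+\sigma))\big]d\xi,\qquad \Psi(\sigma)=2(\rho\ast F)(\sigma)-1 .
\]
These satisfy $\Phi(0)=\Psi(0)=0$, $\Phi(+\infty)=\tfrac16$, $\Psi(+\infty)=1$ (the first two reproducing Proposition \ref{pro:speed}), together with $\Phi'(0)=0$, $\Phi''(0)>0$ and $\Psi'(0)>0$; in particular $V(+\infty;S_2)=g(0,S_2)$.

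I would then analyze the scalar field $V(\cdot;S_2)$ on the half-line. Because $\Psi'(0)>0$ while $\Phi$ vanishes quadratically, $V'(0;S_2)>0$, so $\sigma=0$ is always linearly unstable once $p_3>0$. When $S_2>S_2^*$ one has $V(+\infty;S_2)<0$, there is an interior stable zero $\sigma_\ast(S_2)$, and the inner flow traps $\sigma$ there, giving $S_1^\e\approx\e\,\sigma_\ast(S_2)\to0$ (the sticky regime, with $\sigma_\ast(S_2)\to+\infty$ as $S_2\downarrow S_2^*$). When $S_2<S_2^*$ one has $V(+\infty;S_2)>0$; provided $V(\cdot;S_2)$ has then no interior zero, $\sigma$ escapes to $+\infty$ in inner time $O(1)$, i.e. $S_1^\e$ leaves the boundary layer in real time $O(\e)\to0$. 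Matching this escape to the outer system \eqref{eq:relative} and letting $\e\to0$ would show $S_1>0$ for every $t>T_2$, hence $T_1\le T_2$; combined with Step 1 this yields $T_1=T_2$ and tangency, the marginal outer slope $g(0,S_2^*)=0$ being exactly the vanishing right-derivative of $S_1$ at the split.

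The hard part will be the rigorous matched-asymptotic justification, and specifically the sign control of the inner field: one must show that for $S_2\le S_2^*$ the function $V(\cdot;S_2)$ stays positive on all of $(0,\infty)$ (no spurious interior trap), which requires genuine monotonicity or convexity information about $\Phi$ and $\Psi$ beyond their endpoint values. Equally delicate is the near-marginal regime $S_2\approx S_2^*$, where $V(+\infty;S_2)\approx 0$: here the stable inner equilibrium is pushed to infinity, the escape time degenerates, and one must prove uniformly in $\e$ both that the layer is exited at $t=T_2+o(1)$ and that the reconnection to \eqref{eq:relative} produces the zero emerging slope rather than a corner. Controlling, uniformly across this transition, the error terms dropped in the inner reduction — the slow drift of $S_2$ and the $O(\e)$ corrections in treating $A^\e,B^\e$ near peakon $3$ by their smooth limits — is the crux on which the whole argument rests.
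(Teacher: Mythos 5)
First, a point of comparison: the paper does \emph{not} prove this statement. It is stated explicitly as a conjecture and left open; the paper only proves the one-sided bound $T_1\ge T_2$ (via Proposition \ref{pro:Npeakon} and the sign of the right-hand side of \eqref{eq:relative}) and observes that the conjecture, \emph{if} valid, would force $T_1=T_2$. So there is no paper proof for your attempt to be measured against, and the real question is whether your proposal closes the conjecture on its own. It does not. Your first paragraph --- reducing tangency to the identity $T_1=T_2$ --- is essentially sound and is the same reduction the paper gestures at: when $T_1>T_2$ one has $S_2(T_1)<S_2^*$ strictly, the separated dynamics \eqref{eq:relative} then holds on a full right-neighborhood of $T_1$ (once $S_1>0$ with $g(0,S_2(T_1))>0$ the gap cannot return to zero), and the emerging slope $g(0,S_2(T_1))$ is strictly positive, i.e.\ a corner rather than tangency; conversely $T_1=T_2$ gives slope $g(0,S_2^*)=0$. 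One caveat even here: you assume $S_1>0$ on an interval $(T_1,T_1+\delta)$, which the definition of $T_1$ as an infimum does not supply for free; it has to be argued as above, and in the marginal case $T_1=T_2$ the dynamics near $T_1$ may a priori alternate between \eqref{eq:mergedtwo} and \eqref{eq:relative}, so right-differentiability needs the separate (easy, but missing) argument that $\dot S_1=o(1)$ on the separated pieces.

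The genuine gap is the entire second half, which is where the mathematical content of the conjecture lives: the claim $T_1\le T_2$, i.e.\ that the merged pair \emph{must} split once $S_2$ drops below $S_2^*$. What you offer there is a matched-asymptotics program, not a proof, and you say so yourself. Concretely, three things are asserted but never established: (a) that the regularized gap $S_1^\e=\e\sigma$ obeys the autonomous inner law $d\sigma/d\tau=V(\sigma;S_2)$ up to errors that are uniformly controllable --- this requires justifying the freezing of $S_2$ and of peakon $3$, and quantifying the $O(\e)$ corrections over inner times long enough for escape; (b) the global sign property that $V(\cdot;S_2)>0$ on all of $(0,\infty)$ when $S_2\le S_2^*$ --- your endpoint values $\Phi(0)=\Psi(0)=0$, $\Phi(+\infty)=\tfrac16$, $\Psi(+\infty)=1$ and the local facts $\Phi'(0)=0$, $\Psi'(0)>0$ do not exclude an interior zero of $V$ (a spurious trap), and whether such monotonicity holds may even depend on the mollifier $\rho$; (c) the degenerate regime $S_2\approx S_2^*$, where $V(+\infty;S_2)\to0$, the putative interior equilibrium escapes to infinity, and the escape time blows up --- precisely the regime that decides whether the split happens at $T_2+o(1)$ with zero slope or later with a corner. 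On top of this, Theorem \ref{thm:fixedN} only gives $x_i(t)$ as a \emph{subsequence} limit of $x_i^\e(t)$, so even a complete inner--outer analysis for each fixed $\e$ must be made uniform in $\e$ to say anything about the limit trajectories. Until (a)--(c) are proved, the proposal is a plausible research outline that correctly identifies where the difficulty sits, but the conjecture remains exactly as open as the paper leaves it.
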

If this conjecture is valid, then we must have
\[
\lim_{t\to T_1^+}\frac{d}{dt}S_1(t)=0
\]
and therefore
\[
T_1=T_2.
\]

In summary, the dispersive regularization limit weak solution is quite different from the sticky particle model in \cite{GaoLiu} when $N\ge 3$. Another difference we note is that the sticky particle model has bifurcation instability for the dynamics of three peakon system:
consider a three particles system with initial data: $p_1=4, x_1(0)=-4$, $p_2=3, x_2(0) \in (-4, 4)$ and $p_3=2, x_3(0)=4$. There exists $x_c\in (-4, 4)$ such that in the $x_2(0)>x_c$ cases, the second and third peakons merge first and then they move apart from the first one (see Figure \ref{fig:unstable} (b)), while $x_2(0)<x_c$ implies that the first two merge first and then they catch up with the third one, merging into a single particle (see Figure \ref{fig:unstable} (a)). This is a kind of bifurcation instability due to the initial position of the second peakon: a little change in $x_2(0)$ results in very different solutions at later time. It seems that the $\e\to 0$ limit does not possess such instability due to the splitting as in Figure \ref{fig:threetrajectories2}. 

\begin{figure}[H]
\begin{center}
\includegraphics[width=0.8\textwidth]{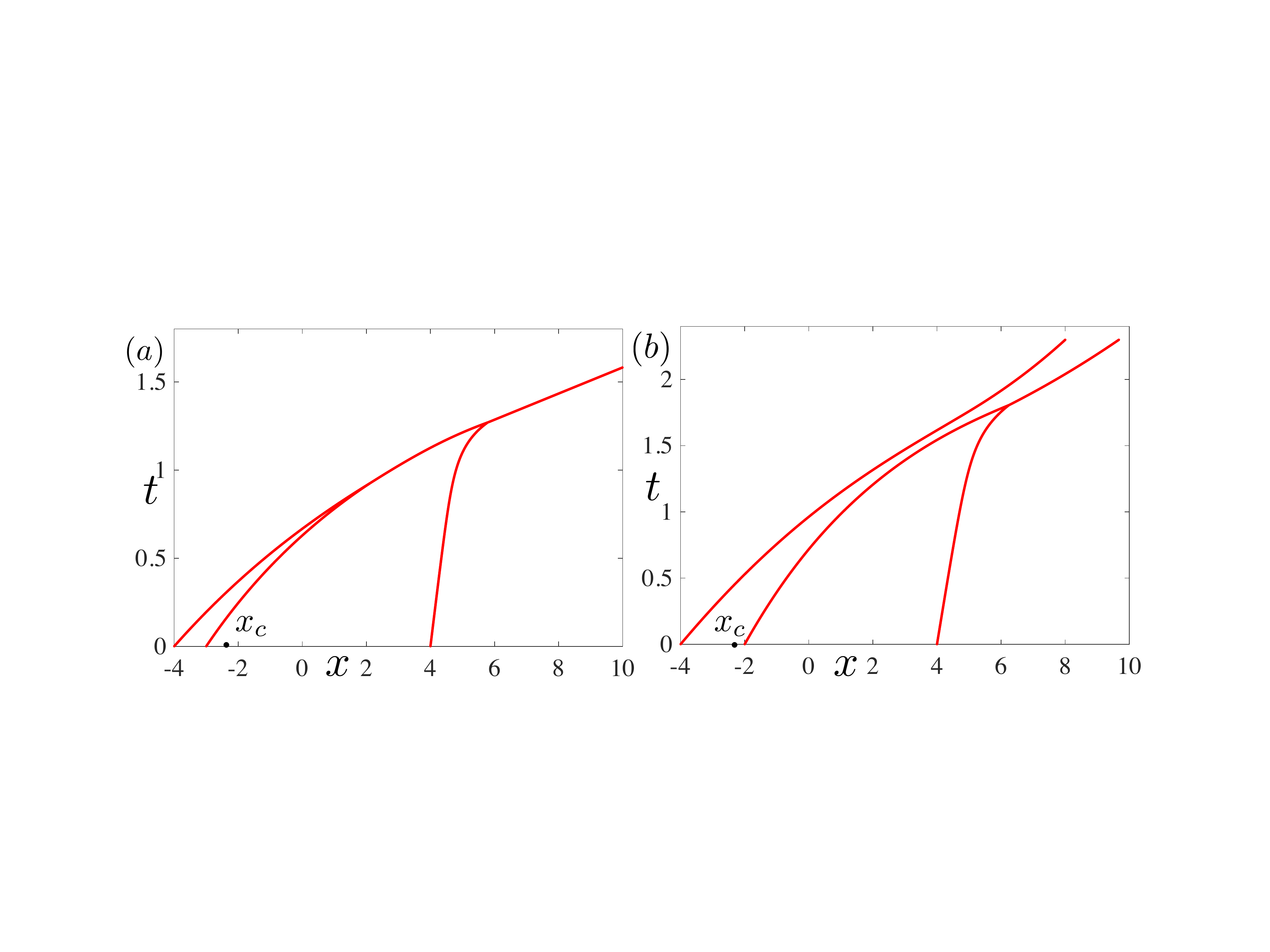}
\end{center}
\caption{(a). $p_1=4,~p_2=3,~p_3=2$ and $x_1(0)=-4,~x_2(0)=-3,~x_3(0)=4$. The three peakons merge into one peakon. (b). $p_1=4,~p_2=3,~p_3=2$ and $x_1(0)=-4,~x_2(0)=-2,~x_3(0)=4$.  The three peakons merge into two separated peakons.}
\label{fig:unstable}
\end{figure}

\section{Mean field limit}\label{sec:meanfield}

In this section, we use a particle blob method to prove global existence of weak solutions to the mCH equation for general initial data $m_0\in\mathcal{M}(\mathbb{R})$. 

Assume that the initial date $m_0$ satisfies
\begin{align}\label{assumem0}
m_0\in\mathcal{M}(\mathbb{R}),\quad\mathrm{supp}\{m_0\}\subset(-L,L),\quad M_0:=\int_{\mathbb{R}}d|m_0|<+\infty.
\end{align}
Let us choose the initial data $\{c_i\}_{i=1}^N$ and $\{p_i\}_{i=1}^N$ to approximate $m_0(x)$. 
Divide the interval $[-L,L]$ into $N$ non-overlapping sub-interval $I_j$ by using the uniform grid with size $h=\frac{2L}{N}$. We choose $c_i$ and $p_i$ as
\begin{align}\label{initialchoose}
c_i:=-L+(i-\frac{1}{2})h;\quad p_i:=\int_{[c_i-\frac{h}{2},c_i+\frac{h}{2})}dm_0,\quad i=1,2,\cdots,N. 
\end{align}
Hence, we have
\begin{align}\label{eq:sumproperty}
\sum_{i=1}^N|p_i|\leq \int_{[-L,L]}d|m_0|\leq M_0.
\end{align}
Using \eqref{initialchoose}, one can easily prove that $m_0$ is approximated by
\begin{align}\label{mNx0}
m^N_0(x):=\sum_{j=1}^N p_j\delta(x-c_j)
\end{align}
 in the sense of measures.  Actually, for any test function $\phi\in C_b(\mathbb{R})$,  we know $\phi$ is uniformly continuous on $[-L,L]$. Hence, for any $\eta>0$, there exists a $\delta>0$ such that when $x,y\in[-L,L]$ and $|x-y|<\delta$, we have $|\phi(x)-\phi(y)|<\eta$. Hence, choose $\frac{h}{2}<\delta$ and we have
\begin{align}\label{initialerror}
&\bigg|\int_{\mathbb{R}}\phi(x)dm_0-\int_{\mathbb{R}}\phi(x)dm^N_0\bigg|=\bigg|\int_{[-L,L]}\phi(x)dm_0-\int_{[-L,L]}\phi(x)dm^N_0\bigg|\nonumber\\
=&\bigg|\sum_{i=1}^N\int_{[c_i-\frac{h}{2},c_i+\frac{h}{2})}\big(\phi(x)-\phi(c_i)\big)dm_0\bigg|\leq \eta\sum_{i=1}^N\int_{[c_i-\frac{h}{2},c_i+\frac{h}{2})}d|m_0|\leq M_0\eta.
\end{align}
Let $\eta\rightarrow0$ and we obtain the narrow convergence from $m^N_0(x)$ to $m_0(x)$.

For initial data $m_0^N(x)$, Theorem \ref{thm:fixedN} gives a weak solution $u^N(x,t)=\sum_{i=1}^Np_iG(x-x_i(t)),$
where $x_i(0)=c_i$ and $p_i$ are given by \eqref{initialchoose}. Moreover, \eqref{eq:lipschitzxj} holds for $x_i(t)$, $1\leq i\leq N.$

Next, we are going to use some space-time BV estimates to show compactness of $u^{N}$. To this end, we recall the definition of BV functions.
\begin{definition}\label{BV}
(i).
For dimension $d\geq1$ and an open set $\Omega\subset\mathbb{R}^d$, a function $f\in L^1(\Omega)$ belongs to $BV(\Omega)$ if
$$Tot.Var.\{f\}:=\sup\Big\{\int_{\Omega}f(x)\nabla\cdot\phi(x)dx:\phi\in C_c^1(\Omega;\mathbb{R}^d), ~~||\phi||_{L^\infty}\leq1\Big\}<\infty.$$

(ii). (Equivalent definition for one dimension case) A function $f$ belongs to $BV(\mathbb{R})$ if for any $\{x_i\}\subset\mathbb{R}$, $x_i<x_{i+1}$, the following statement holds:
$$Tot.Var.\{f\}:=\sup_{\{x_i\}}\Big\{\sum_i|f(x_i)-f(x_{i-1})|\Big\}<\infty.$$
\end{definition}
\begin{remark}
Let $\Omega\subset\mathbb{R}^d$ for $d\geq1$ and $f\in BV(\Omega)$. $Df:=(D_{x_1}f,\ldots,D_{x_d}f)$ is the distributional gradient of $f$. Then, $Df$ is a vector Radon measure and  the total variation of $f$ is equal to the total variation of $|D f|$: $Tot.Var.\{f\}=|D f|(\Omega).$  Here, $|D f|$ is the total variation measure of the vector measure $Df$ (\cite[Definition (13.2)]{Leoni}).

If a function $f:\mathbb{R}\rightarrow \mathbb{R}$ satisfies Definition \ref{BV} (ii), then $f$ satisfies Definition (i). On the contrary, if $f$ satisfies Definition \ref{BV} (i), then there exists a right continuous representative which satisfies Definition (ii). See \cite[Theorem 7.2]{Leoni} for the proof.
\end{remark}

Now, we give some space and time BV estimates about $u^{N},\partial_xu^{N}$, which is similar to \cite[Proposition 3.3]{GaoLiu}.
\begin{proposition}\label{BV properties}
Assume initial value $m_0$ satisfies \eqref{assumem0}.  $p_i$ and $c_i$, $1\leq i\leq N$,  are given by \eqref{initialchoose} and $m_0^N$ is defined by \eqref{mNx0}. Let $u^N(x,t)=\sum_{i=1}^Np_iG(x-x_i(t))$ be the $N$-peakon solution given by Theorem \ref{thm:fixedN} subject to initial data $m^N(x,0)=(1-\partial_{xx})u^N(x,0)=m_0^N(x)$. Then, the following statements hold.

$\mathrm{(i).}$
For any $t\in[0,\infty)$, we have
\begin{align}
Tot.Var.\{u^N(\cdot,t)\}\leq M_0,\quad Tot.Var.\{\partial_xu^N(\cdot,t)\}\leq 2M_0\textrm{ uniformly in }N.\label{total variation}
\end{align}

$\mathrm{(ii).}$
\begin{align}
||u^N||_{L^\infty}\leq \frac{1}{2}M_0,\quad ||\partial_xu^N||_{L^\infty}\leq \frac{1}{2}M_0\textrm{ uniformly in }N.\label{bounded}
\end{align}

$\mathrm{(iii).}$
For $t,s\in[0,\infty)$, we have
\begin{small}
\begin{align}
\int_{\mathbb{R}}|u^N(x,t)-u^N(x,s)|dx\leq \frac{1}{2}M_0^3|t-s|,~~
\int_{\mathbb{R}}|\partial_xu^N(x,t)-\partial_xu^N(x,s)|dx\leq M_0^3|t-s|.\label{lipschitz}
\end{align}
\end{small}

$\mathrm{(iv).}$
For any $T>0$, there exist  subsequences of $u^{N}$, $u_x^{N}$ (also labeled as $u^{N}$, $u_x^{N}$) and two functions $u,~u_x\in BV(\mathbb{R}\times[0,T))$ such that
\begin{align}\label{BVconvergence1}
u^{N}\rightarrow u,~~u_x^{N}\rightarrow u_x\textrm{ in }L_{loc}^1(\mathbb{R}\times[0,+\infty)) \textrm{ as } N\rightarrow\infty,
\end{align}
and  $u$, $u_x$ satisfy all the properties in  $\mathrm{(i)}$, $\mathrm{(ii)}$ and $\mathrm{(iii)}$.

\end{proposition}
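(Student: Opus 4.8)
The plan is to reduce every estimate to the single building block $G$ and its derivative, and then sum over the $N$ peakons. First I would record the single-peakon data: since $G(x)=\frac12 e^{-|x|}$ is unimodal with maximum $\frac12$ at the origin, $Tot.Var.\{G\}=1$ and $\|G\|_{L^\infty}=\frac12$; and since $G_x(x)=-\frac12\mathrm{sgn}(x)e^{-|x|}$ is monotone increasing on each of $(-\infty,0)$ and $(0,\infty)$ with a single downward jump of size $1$ at $0$, we get $Tot.Var.\{G_x\}=\frac12+1+\frac12=2$ and $\|G_x\|_{L^\infty}=\frac12$. Parts (i) and (ii) then follow at once from the subadditivity and positive homogeneity of total variation (respectively the triangle inequality for $\|\cdot\|_{L^\infty}$) applied to $u^N(\cdot,t)=\sum_{i=1}^N p_i G(\cdot-x_i(t))$, together with $\sum_{i=1}^N|p_i|\le M_0$ from \eqref{eq:sumproperty}. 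All these bounds are invariant under the translations $x_i(t)$, so they hold for every $t$ and uniformly in $N$.

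For the time-regularity estimates in (iii), the key tool is the $L^1$ translation bound for BV functions: for $f\in BV(\mathbb{R})$ and $a,b\in\mathbb{R}$,
\[
\int_{\mathbb{R}}|f(x-a)-f(x-b)|\,dx\le |a-b|\,Tot.Var.\{f\},
\]
which follows by writing $f(x-a)-f(x-b)$ as the integral of $df$ over $[x-b,x-a]$ and applying Fubini. Applying this with $f=G$ and with $f=G_x$, and summing over $i$, I obtain $\int_{\mathbb{R}}|u^N(x,t)-u^N(x,s)|\,dx\le\sum_i|p_i|\,|x_i(t)-x_i(s)|$ and $\int_{\mathbb{R}}|\partial_x u^N(x,t)-\partial_x u^N(x,s)|\,dx\le 2\sum_i|p_i|\,|x_i(t)-x_i(s)|$. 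Inserting the trajectory Lipschitz bound $|x_i(t)-x_i(s)|\le\frac12 M_0^2|t-s|$ from Theorem \ref{thm:fixedN} (see \eqref{eq:lipschitzxj}) and using $\sum_i|p_i|\le M_0$ produces exactly the constants $\frac12 M_0^3$ and $M_0^3$.

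For the compactness in (iv), I would turn the estimates of (i) and (iii) into a uniform space-time BV bound on each box $\Omega_{R,T}=(-R,R)\times(0,T)$. The spatial contribution is handled by integrating (i) in time, $\int_0^T Tot.Var.\{u^N(\cdot,t)\}\,dt\le TM_0$; the temporal contribution comes from the $L^1$-in-time Lipschitz estimate of (iii): partitioning $[0,T]$, summing the increments, and applying monotone convergence gives $\int_{\mathbb{R}}Tot.Var.\{u^N(x,\cdot)\}\,dx\le\frac12 M_0^3 T$. Together with the $L^\infty$ bound (ii) this shows $\{u^N\}$ is bounded in $BV(\Omega_{R,T})$, and the same argument bounds $\{\partial_x u^N\}$. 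The compact embedding $BV(\Omega_{R,T})\hookrightarrow L^1(\Omega_{R,T})$ then yields convergent subsequences, and a diagonal argument over $R,T\to\infty$ gives a single subsequence with $u^N\to u$ and $\partial_x u^N\to u_x$ in $L^1_{loc}(\mathbb{R}\times[0,\infty))$. Passing to distributions identifies $u_x=\partial_x u$, and the bounds in (i), (ii), (iii) survive the limit by lower semicontinuity of total variation and of $\|\cdot\|_{L^\infty}$ under $L^1_{loc}$ convergence (with a direct argument for the time-Lipschitz inequality).

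The only genuinely delicate step I anticipate is in (iv): converting the $L^1$-valued Lipschitz continuity in time into a bound on the temporal total variation via the Fubini/monotone-convergence argument, and the bookkeeping ensuring that one diagonalized subsequence works simultaneously for $u^N$ and $\partial_x u^N$ with compatible limits (so that indeed $u_x=\partial_x u$). Everything else is routine, reducing to the single-peakon computations above and the subadditivity of total variation.
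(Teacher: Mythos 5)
Your proposal is correct, and it is essentially the argument the paper relies on: the paper gives no details itself but defers to \cite[Proposition 3.3]{GaoLiu}, remarking only that the key ingredient for \eqref{lipschitz} is the trajectory bound \eqref{eq:lipschitzxj}, which is precisely the estimate you invoke. Your single-peakon total-variation computations, the $L^1$ translation bound, and the space-time BV/compactness argument with diagonalization are the standard route that the cited proof follows, so there is nothing to flag.
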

\begin{proof}
See \cite[Proposition 3.3]{GaoLiu}. We remark that the key estimate to prove \eqref{lipschitz} is \eqref{eq:lipschitzxj}.
\end{proof}

With Proposition \ref{BV properties}, we have the following theorem:
\begin{theorem}\label{globalweaksolution}
Let the assumptions in Proposition \ref{BV properties} hold. Then, the following statements hold:

$\mathrm{(i)}$. The limiting function $u$ obtained in Proposition \ref{BV properties} $(\mathrm{(iv)})$  satisfies
\begin{align}\label{solutionspace}
u\in C([0,+\infty);H^1(\mathbb{R}))\cap L^\infty(0,+\infty; W^{1,\infty}(\mathbb{R}))
\end{align}
and it is a global weak solution of the mCH equation \eqref{mCH}.

$\mathrm{(ii)}$.  For any $T>0$, we have
$$m=(1-\partial_{xx})u\in \mathcal{M}(\mathbb{R}\times[0,T))$$
and there exists a subsequence of $m^{N}$ (also labeled as $m^{N}$) such that
\begin{align}\label{mconvergence}
m^{N}\stackrel{\ast}{\rightharpoonup} m\textrm{ in } \mathcal{M}(\mathbb{R}\times[0,T))\quad (\textrm{as }N\rightarrow+\infty ).
\end{align}

$\mathrm{(iii)}$.  For a.e. $t\geq0$ we have (in subsequence sense)
\begin{align}\label{aetimeconvergence}
m^N(\cdot,t)\stackrel{\ast}{\rightharpoonup} m(\cdot,t)\textrm{ in } \mathcal{M}(\mathbb{R})~\textrm{as }~ N\rightarrow+\infty 
\end{align}
and
\begin{align}\label{compactsupport}
\mathrm{supp}\{m(\cdot,t)\}\subset\Big(-L-\frac{1}{2}M_0^2t, L+\frac{1}{2}M_0^2t\Big),
\end{align}

\end{theorem}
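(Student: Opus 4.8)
The plan is to pass to the limit $N\to\infty$ in the weak formulation satisfied by each $N$-peakon solution $u^N$, exploiting the uniform (in $N$) bounds and the compactness already secured in Proposition \ref{BV properties}. The whole argument parallels the proof of Theorem \ref{thm:fixedN}(iii), with the role of $\epsilon\to0$ replaced by $N\to\infty$ and the initial-data convergence supplied by the estimate \eqref{initialerror}. For part (i), the regularity \eqref{solutionspace} follows from Proposition \ref{BV properties}(iv) together with the uniform $L^\infty$ bounds \eqref{bounded} and the time-Lipschitz $L^1$ estimates \eqref{lipschitz}: the former bound the limit in $W^{1,\infty}$, while the latter, combined with the uniform $W^{1,\infty}$ control and the exponential decay of the peakon profiles, give continuity in time in $H^1$. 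To show $u$ is a weak solution I would start from the identity $\mathcal{L}(u^N,\phi)=-\int_{\mathbb{R}}\phi(x,0)\,dm_0^N$, valid for every $N$ by Theorem \ref{thm:fixedN}(iii). On the left, the $L^1_{loc}$ convergence $u^N\to u$, $u_x^N\to u_x$ from \eqref{BVconvergence1}, used with the uniform bounds \eqref{bounded} to control the cubic terms (factoring differences of cubes exactly as in the proof of Theorem \ref{thm:fixedN}(iii)), gives $\mathcal{L}(u^N,\phi)\to\mathcal{L}(u,\phi)$. On the right, the narrow convergence $m_0^N\to m_0$ established in \eqref{initialerror} gives $\int_{\mathbb{R}}\phi(x,0)\,dm_0^N\to\int_{\mathbb{R}}\phi(x,0)\,dm_0$. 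Passing to the limit yields $\mathcal{L}(u,\phi)=-\int_{\mathbb{R}}\phi(x,0)\,dm_0$, i.e. $u$ is a global weak solution.

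For part (ii), since $u,u_x\in BV(\mathbb{R}\times[0,T))$ by Proposition \ref{BV properties}(iv), the distributional second derivative $u_{xx}=D(u_x)$ is a Radon measure, so $m=(1-\partial_{xx})u\in\mathcal{M}(\mathbb{R}\times[0,T))$. For the convergence, the total-variation bound $\sum_i|p_i|\le M_0$ from \eqref{eq:sumproperty} shows $\{m^N\}$ is bounded in $\mathcal{M}(\mathbb{R}\times[0,T))$ with mass at most $M_0 T$, so by weak-$*$ precompactness a subsequence converges weak-$*$ to some $\mu$. To identify $\mu=m$, I test against $\psi\in C_c^\infty$: since $m^N=(1-\partial_{xx})u^N$ distributionally, $\langle m^N,\psi\rangle=\langle u^N,(1-\partial_{xx})\psi\rangle\to\langle u,(1-\partial_{xx})\psi\rangle=\langle m,\psi\rangle$ by \eqref{BVconvergence1}, while also $\langle m^N,\psi\rangle\to\langle\mu,\psi\rangle$, forcing $\mu=m$.

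For part (iii), from the space-time convergence \eqref{BVconvergence1} one extracts a further subsequence with $u^N(\cdot,t)\to u(\cdot,t)$ and $u_x^N(\cdot,t)\to u_x(\cdot,t)$ in $L^1_{loc}(\mathbb{R})$ for a.e. $t$, the time-Lipschitz estimates \eqref{lipschitz} guaranteeing that a single subsequence works for a.e. fixed slice. Fixing such a $t$, $m^N(\cdot,t)=(1-\partial_{xx})u^N(\cdot,t)\to(1-\partial_{xx})u(\cdot,t)=m(\cdot,t)$ as distributions, and since $\|m^N(\cdot,t)\|_{\mathcal{M}}=\sum_i|p_i|\le M_0$ uniformly, this distributional convergence upgrades to weak-$*$ convergence in $\mathcal{M}(\mathbb{R})$. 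Finally, the speed bound \eqref{eq:lipschitzxj} gives $x_i(t)\in(-L-\frac12 M_0^2 t,\,L+\frac12 M_0^2 t)$ because $c_i\in(-L,L)$, so each $m^N(\cdot,t)$ is supported there, and the support of the weak-$*$ limit lies in the same closed interval, yielding \eqref{compactsupport}.

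The main obstacle I expect is part (iii): converting the space-time $L^1_{loc}$ compactness into convergence of the time slices $m^N(\cdot,t)$ as measures for a.e. $t$. This requires simultaneously selecting one subsequence that works for a.e. $t$ (handled via the time-Lipschitz bounds \eqref{lipschitz}, which furnish equicontinuity in time) and promoting distributional convergence to weak-$*$ convergence in $\mathcal{M}(\mathbb{R})$ using the uniform total-mass bound $\sum_i|p_i|\le M_0$. A second delicate point is the $H^1$-in-time continuity in part (i), which must combine the $L^1$ time-modulus \eqref{lipschitz} with the uniform $W^{1,\infty}$ bound rather than follow from either one alone.
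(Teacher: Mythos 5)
The paper itself omits this proof, deferring to \cite[Theorem 3.4]{GaoLiu}, and your argument supplies exactly the standard limit passage that citation stands for --- sending $N\to\infty$ in $\mathcal{L}(u^N,\phi)=-\int_{\mathbb{R}}\phi(x,0)\,dm_0^N$ using the compactness and uniform bounds of Proposition \ref{BV properties}, the narrow convergence \eqref{initialerror} of the initial data, weak-$*$ compactness of $\{m^N\}$ from the mass bound \eqref{eq:sumproperty}, and distributional identification of the limit --- so it is correct and takes essentially the same route. The only blemish is cosmetic: your weak-$*$ limit argument for \eqref{compactsupport} naturally yields support in the closed interval $\bigl[-L-\frac{1}{2}M_0^2t,\ L+\frac{1}{2}M_0^2t\bigr]$ rather than the open one, an imprecision already present in the theorem's own formulation.
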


\begin{proof}
The proof is similar to \cite[Theorem 3.4]{GaoLiu} and we omit it.
\end{proof}
\begin{remark}\label{positiveRadonmeasure}
We remark that when $m_0$ is a positive Radon measure, $m$ is also positive. Actually, $m_0\in\mathcal{M}_+(\mathbb{R})$ implies that $p_i\geq0$ and $m^{N,\epsilon}\geq0.$ Therefore, the limiting measure $m$ belongs to $\mathcal{M}_+(\mathbb{R}\times[0,T)).$ 

By the same methods as in \cite[Theorem 3.5]{GaoLiu}, we can also show that for a.e. $t\geq0$,
\begin{align}\label{eq:totalBVstability}
m(\cdot,t)(\mathbb{R})=m_0(\mathbb{R}),~~|m(\cdot,t)|(\mathbb{R})\leq |m_0|(\mathbb{R}).
\end{align}

\end{remark}

\newpage

\bibliographystyle{plain}
\bibliography{bibofblobmCH}

\end{document}